\renewcommand*{\backref}[1]{}\renewcommand*{\backrefalt}[4]{\ifcase #1 (\tt not cited)\or (\tt cited on page~#2)\else (\tt cited on pages~#2)\fi}
\theoremstyle{plain}
\newtheorem{theorem}{Theorem}[section]
\newtheorem{lemma}[theorem]{Lemma}
\newtheorem{proposition}{Proposition}
\theoremstyle{definition}
\newtheorem{definition}[theorem]{Definition}
\newtheorem{remark}{Remark}
\newtheorem{hypothesis}{Hypothesis}
\numberwithin{equation}{section}
\author[Hind El Baggari]{ Hind El Baggari \textsuperscript{1}}
\address{\textsuperscript{1}Cadi Ayyad University, Faculty of Sciences Semlalia, Laboratory of Mathematics, Modeling and Automatic Systems, Marrakech, Morocco.}
\email{hindbaggari@gmail.com}
\author[Ilham Ouelddris]{Ilham Ouelddris \textsuperscript{1}}
\address{\textsuperscript{1}Cadi Ayyad University, Faculty of Sciences Semlalia, Laboratory of Mathematics, Modeling and Automatic Systems, Marrakech, Morocco.}
\email{ouelddris.ilham@gmail.com}
\subjclass[2020]{Primary: 35R12, 49N25; Secondary: 93C27.}
\keywords{Impulsive approximate controllability, impulse control problems, Carleman commutator, logarithmic convexity, degenerate equation, non divergence form.}
\title{ Logarithmic Convexity and impulse Approximate Controllability for Degenerate Parabolic Equations with Robin Boundary Conditions}
\begin{document}

\begin{abstract}
    In this work, we investigate the approximate controllability of a class of one-dimensional degenerate parabolic equations with Robin boundary conditions. The degeneracy occurs at one endpoint of the spatial domain, and we apply an impulsive control in a small region at a fixed moment. Our main result establishes an observability inequality for the adjoint system, from which we deduce approximate controllability at final time . The proof relies on a logarithmic convexity argument, developed through a Carleman commutator approach.
\end{abstract}

\maketitle

\section{Introduction}

Impulsive systems are a class of dynamical systems that experience sudden changes at specific moments in time. They appear in many areas like engineering, biology, population studies, and economics  see e.g., \cite{Agarwal2017, Terzieva2018}.

In this paper, we study the approximate controllability of a degenerate parabolic equation with Robin boundary conditions and impulsive control acting at a fixed time $\tau\in (0,T)$. The equation is given by
  \begin{empheq}[left = \empheqlbrace]{alignat=2} \label{1.1}
		\begin{aligned}
		&\partial_t y(x,t) - (x^{\alpha} y_x)_x (x,t) = 0, && \quad\text { in } (0,1) \times(0, T)\backslash\{\tau\} ,\\
            &y(x,\tau)=y(x,\tau^{-})+1_\omega(x) f(x) && \quad x\in (0,1),\\
            &\beta_0 y (0,t)- \beta_1 (x^{\alpha} y_x)(0,t)=0, && \quad\text { in } (0, T) ,\\ 
            &y (1,t)=0, \\ 
			&y(x,0)= y_0(x),   && \quad \text{ on } (0,1).
		\end{aligned}
	\end{empheq}
Here, $\alpha\in \left(0,1\right)$ is the degeneracy rate, $\omega$ is a nonempty open subset of $(0,1)$, $\beta_0, \beta_1 \in \mathbb{R}$ and $y_0\in L^2(0,1)$ denotes the initial data.
First, let us present the following assumption
\begin{hypothesis}\label{hypo1}
    In what follows, we will assume that :
    \begin{enumerate}
        \item The control is supported in the right neighborhood of $x=0$, i.e.,
		\begin{equation}\label{cond}
			\omega:=(0,\kappa), \qquad \text{for some constant} \;\kappa\in (0,1).
		\end{equation}
        \item The paramets $\beta_0$ and $\beta_1$ satisify $\beta_0 \beta_1 \geq 0$ and $\beta_1 \neq 0$.
    \end{enumerate}
\end{hypothesis}

Robin boundary conditions can be described as a combination of Dirichlet and Neumann conditions, and they are frequently referred to as convective boundary conditions in heat transfer applications \cite{Hahn2012}. These conditions illustrate cases where the heat flow at the boundary is proportional to the temperature difference between the system and its surrounding environment. For a more comprehensive physical interpretation of Robin boundary conditions, we refer to the studies conducted by G.R. Goldstein \cite{Goldstein2006}.

Extensive studies has been conducted on the classical heat equation featuring linear Robin boundary conditions, with significant contributions from \cite{Daners2000} and \cite{Warma2006}. Furthermore, degenerate parabolic equations, which are present in several real-world situations, have received substantial academic investigation. A pertinent example is the Budyko-Sellers model in climatology, which examines tthe effects of continental and oceanic ice on climate change, thereby demonstrating the application of these equations \cite{Diaz2006, North2006}.

The analysis of degenerate parabolic problems can be traced back to significant foundational works in the area. Foundational results are found in \cite{Alabau-Boussouira2006} and \cite{Cannarsa2005}. Furthermore, \cite{Favini2001}, \cite{Taira1992}, and \cite{Taira1995} have delved into various boundary conditions for degenerate parabolic systems, utilizing a functional analytic approach to develop Feller semigroups associated with degenerate elliptic operators that incorporate Wentzell boundary conditions. Additionally, in \cite{Goldstein1991}, J.A. Goldstein and C.Y. Lin explore degenerate operators with a variety of boundary conditions, such as Dirichlet, Neumann, periodic, and nonlinear Robin types.

Degenerate elliptic operators also appear in models of gene frequency in population genetics, such as the Wright-Fischer model discussed in \cite{Shimakura1992}. More recently, \cite{NO'A} applied Robin boundary conditions to model heat transfer and established the equivalence between null-controllability and observability inequalities.

Impulsive dynamical systems have received considerable attention, especially concerning their existence, stability, and controllability characteristics. For comprehensive insights on these subjects, we direct readers to \cite{Jose2020, Lalvay2022}. Nonetheless, the body of literature focusing specifically on impulsive controllability is still somewhat sparse. Significant works in this area include \cite{BenAissa2021, Miller2003, Tao2001}, which utilize essential concepts such as logarithmic convexity and spectral inequalities.

The research presented in \cite{Phung2018} represents a significant development, as the authors introduced a Carleman commutator approach based on logarithmic convexity to derive observability estimates for parabolic equations governed by homogeneous Dirichlet boundary conditions. This technique was later modified for the Neumann context in \cite{BuffePhung2022}. Additional advancements regarding dynamic boundary conditions are discussed in \cite{Hind2023, Chorfi2022}.

In the context of degenerate parabolic equations with impulsive control, the research conducted in \cite{BuffePhung2018} demonstrated a Lebeau–Robbiano-type spectral inequality for a one-dimensional degenerate elliptic operator, which was subsequently utilized to achieve impulse control and finite-time stabilization. Additionally, the authors of \cite{Maarouf2023} examined the null approximate controllability of degenerate singular parabolic equations under the influence of impulsive actions.

In the present paper, we contribute to this growing body of research by studying the approximate controllability of a degenerate parabolic equation subject to Robin boundary conditions at the degeneracy point and a Dirichlet condition at the non-degenerate end. Our approach is based on establishing a logarithmic convexity property through a Carleman commutator method, following the ideas developed in \cite{Phung2018}.

More precisely, our main concern is to prove that the impulsive degenerate system with drift \eqref{1.1} is null approximately controllable according to the following definition.
	\begin{definition}[see Definition 1.2 of \cite{qin}]
		The system \eqref{1.1} is said to be null approximately impulse controllable at time $T>0$ if for any $\epsilon >0$ and $y_0 \in L^2(0,1)$, there exists a control $h(\cdot,\tau) \in L^2(\omega)$ such that the associated state $y$ at final time satisfies
		\begin{equation}\label{eqT5}
			\parallel y(\cdot, T) \parallel \leq \epsilon \parallel y_0 \parallel.
		\end{equation}
	\end{definition}
	When this null approximate impulse controllability property holds true, we have that for any given $\epsilon>0$ and $y_0 \in L^2(0,1)$,
	\begin{equation*}
		\mathcal{R}_{T,\epsilon,y_0} := \left\lbrace h(\cdot,\tau) \in L^2(\omega): \mathit{the \; solution} \; y \; \mathit{of \; system} \; \eqref{1.1} \; \mathit{satisfies} 	\parallel y(\cdot, T) \parallel \leq \epsilon \parallel y_0 \parallel \right\rbrace
	\end{equation*}
	is a non-empty set. We can then define the cost associated with null approximate impulse control as follows.
	\begin{definition}
		The quantity 
		\begin{equation*}
			\mathcal{P}(T,\epsilon) := \sup_{\parallel y_0 \parallel =1} \,\inf_{h(\cdot,\tau) \in \mathcal{R}_{T,\epsilon,y_0}} \parallel h(\cdot,\tau) \parallel_{L^2(\omega)}
		\end{equation*}
		is called the cost of null approximate impulse control at time $T$.
	\end{definition}
Next, we present our first main result, which is an observability estimate at a single time instant. The proof can be found in Section \ref{section3}.
\begin{lemma}\label{lem1}
		Assume that Hypothesis \ref{hypo1} holds true. Let us consider $\omega$ a sub-interval of $(0,1)$ that satisfies the geometric condition \eqref{cond}. Then, there exists a positive constant $\mathcal{C}$ and $\rho \in (0,1)$ such that the following observation estimate is satisfied 
		\begin{equation}\label{ob}
			\parallel u(\cdot,T) \parallel\leq \left(  e^{\mathcal{C}\left(1+\frac{1}{T-\tau}\right)} \parallel u(.,T) \parallel_{L^2(\omega)} \right)^{\rho} \parallel u(\cdot,0) \parallel^{1-\rho},
		\end{equation}
		where $u$ is the solution of the following non-impulsive system
		\begin{empheq}[left = \empheqlbrace]{alignat=2} \label{1.2}
			\begin{aligned}
				&\partial_{t} u- (x^{\alpha} u_x)_x=0 , && \quad\text { in } Q:=(0,1) \times (0,T),\\
                 &\beta_0 u (0,t)- \beta_1 (x^{\alpha} u_x)(0,t)=0, && \quad\text { in } (0, T) ,\\ 
				& u(1,t)= 0, \\
				& u(x,0)= u_{0}(x),  && \quad \text{ on } (0,1).
			\end{aligned}
		\end{empheq}
	\end{lemma}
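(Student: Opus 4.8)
The plan is to derive \eqref{ob} as a quantitative unique continuation estimate produced by a logarithmic convexity property of a suitably weighted energy, following the Carleman commutator strategy of \cite{Phung2018}. Because the diffusion coefficient $x^{\alpha}$ vanishes at $x=0$, the natural length scale is the intrinsic distance
\[
d(x)=\int_0^x s^{-\alpha/2}\,\d s=\frac{x^{1-\alpha/2}}{1-\alpha/2},
\]
which is finite for $\alpha\in(0,1)$; this is exactly what makes $x=0$ an accessible boundary at which the Robin condition $\beta_0 u(0)-\beta_1(x^{\alpha}u_x)(0)=0$ is meaningful, and it is the variable in which the Gaussian weight below should be written. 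First I would record the well-posedness and the dissipativity of \eqref{1.2}: testing the equation against $u$ and integrating by parts gives
\[
\tfrac12\tfrac{\d}{\d t}\|u(\cdot,t)\|^2=-\int_0^1 x^{\alpha}u_x^2\,\d x-\frac{\beta_0}{\beta_1}\,u(0,t)^2\le 0,
\]
where the term at $x=1$ drops by the Dirichlet condition and the term at $x=0$ is rewritten through the Robin relation, its sign being nonnegative precisely because Hypothesis \ref{hypo1} imposes $\beta_0\beta_1\ge 0$ with $\beta_1\neq0$. In particular $\|u(\cdot,\tau)\|\le\|u(\cdot,0)\|$, which is what will let me replace the data at time $\tau$ by the data at time $0$ at the very end.

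Next I would run the convexity argument on the interval $(\tau,T)$. Introduce a one-parameter Gaussian weight $\theta(x,t)$, adapted to $d(x)$ and singular as $t\uparrow T$ on a scale comparable to $T-\tau$, set $w=\theta^{1/2}u$, and decompose the generator of the equation satisfied by $w$ into its symmetric part $\mathcal S$ and its skew-symmetric part $\mathcal A$ in $L^2(0,1)$. With $H(t)=\|w(\cdot,t)\|^2$, $D(t)=\langle\mathcal S w,w\rangle$ and the frequency function $N=D/H$, the skew-symmetry of $\mathcal A$ yields $H'=2D$, hence $N=\tfrac12(\log H)'$, while differentiating $D$ produces
\[
D'=\langle \partial_t\mathcal S\,w,w\rangle+2\|\mathcal S w\|^2+\langle[\mathcal A,\mathcal S]\,w,w\rangle .
\]
The whole method rests on showing that the commutator term, together with $\partial_t\mathcal S$, is bounded below by $-C(H+D)$, so that $N'\ge -C(N+1)$ and $t\mapsto\log H(t)$ is convex up to a linear correction of size $e^{Ct}$.

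The hard part will be the commutator estimate at the degenerate endpoint. Computing $\langle[\mathcal A,\mathcal S]w,w\rangle$ generates interior terms handled by the choice of weight, plus boundary contributions at $x=0$ and $x=1$; the one at $x=1$ is controlled by the homogeneous Dirichlet condition, but at $x=0$ the vanishing of $x^{\alpha}$ competes with the derivatives of the weight, and the trace $u(0,t)$ must be eliminated through the Robin relation. This is where Hypothesis \ref{hypo1} is used a second time: the condition $\beta_0\beta_1\ge 0$ guarantees that the endpoint contribution carries the favorable sign, so that no uncontrolled negative term survives. I would choose the weight so that its gradient is transversal to $x=0$ and so that the observation is forced to come from the region $\omega=(0,\kappa)$; the geometric assumption \eqref{cond}, namely that $\omega$ is a right neighborhood of the degeneracy, is precisely what makes such a weight compatible with the commutator sign condition.

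Finally I would integrate the differential inequality for $N$ over $(\tau,T)$ to obtain the three-point log-convexity bound, evaluate it so that the left endpoint carries $\|u(\cdot,\tau)\|$ and the right endpoint carries the weighted norm at time $T$, and use the concentration of $\theta(\cdot,T)$ to replace the full weighted trace by $\|u(\cdot,T)\|_{L^2(\omega)}$. Tracking the dependence of the constants on the singularity scale of the weight produces the factor $e^{\mathcal C(1+1/(T-\tau))}$ and an exponent $\rho\in(0,1)$ determined by the interpolation. The resulting inequality reads
\[
\|u(\cdot,T)\|\le\Big(e^{\mathcal C(1+\frac1{T-\tau})}\|u(\cdot,T)\|_{L^2(\omega)}\Big)^{\rho}\|u(\cdot,\tau)\|^{1-\rho},
\]
and substituting the dissipativity bound $\|u(\cdot,\tau)\|\le\|u(\cdot,0)\|$ from the first step gives exactly \eqref{ob}.
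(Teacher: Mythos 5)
Your proposal follows essentially the same route as the paper: the same Carleman commutator/logarithmic convexity scheme of Phung, with your Gaussian weight in the intrinsic distance $d(x)$ coinciding (up to constants) with the paper's choice $\varphi(x)=-\tfrac{1}{2(2-\alpha)^2}x^{2-\alpha}$, $\theta(t)=T-t+h$, the same symmetric/antisymmetric splitting and frequency function, the same use of $\beta_0\beta_1\ge 0$ through the Robin relation to give the boundary terms at $x=0$ the favorable sign, and the same dissipativity lemma to trade $\|u(\cdot,\tau)\|$ for $\|u(\cdot,0)\|$. The only cosmetic deviations are that you run the convexity on $(\tau,T)$ where the paper clusters $t_1,t_2,t_3$ near $T$ and absorbs the global remainder by optimizing the regularization parameter $h$, and your sign convention $[\mathcal{A},\mathcal{S}]$ in the formula for $D'$ is opposite to the paper's $[\mathcal{S},\mathcal{A}]$.
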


The structure of the paper is as follows. Section \ref{section2} presents a review of the results concerning the well-posedness of the system. In Section \ref{section3}, we describe the approach taken to obtain the observation estimate at a specific time point for the system \eqref{1.1}. Section \ref{section4} focuses on the approximate controllability of the impulse of the system. Lastly, Section \ref{section5} concludes the paper.

\section{Well-Posedness of the system}\label{section2}
In this section, we recall some results that will be useful in the sequel. Now, let us define the following weighted Sobolev spaces:
\[H^1_{\alpha}(0,1):= \{u \in L^2(0,1): u \;\text{is absolutely continuous in}\; \left[0,1\right], x^{\frac{\alpha}{2}} u_x\in L^2(0,1) \;\text{and}\; u(1)=0\},\]
and
\begin{align*}
    H^2_{\alpha}(0,1)
    =&\{u\in H^1_{\alpha}(0,1): x^{\alpha} u_x \in H^1(0,1)\},
\end{align*}
with the norms
\[\|u\|^2_{H^1_{\alpha}}:= \int_0^1 u^2 \;\mathrm{d}x + \int_0^1 x^{\alpha} u_x^2 \;\mathrm{d}x \qquad \text{and} \qquad \|u\|^2_{H^2_{\alpha}}:= \|u\|^2_{H^1_{\alpha}} + \int_0^1 |(x^{\alpha} u_x)_x|^2 \;\mathrm{d}x.\]
Now, consider the operator
\begin{align*}
    \mathcal{A}u= (x^{\alpha} u_x)_x
\end{align*}
for $u\in D(\mathcal{A})$, where 
\begin{align*}
D(\mathcal{A})=\{  u\in H^2_\alpha(0,1) \, : \, \beta_0 u (0,t)- \beta_1 (x^{\alpha} u_x)(0,t)=0  \}.
\end{align*}
We recall the following  result that has been proven in \cite{NO'A}.
\begin{proposition}
     The operator $(\mathcal{A},D(\mathcal{A}))$ is self-adjoint, non-negative
with dense domain and generates a strongly continuous semigroup $(e^{t\mathcal{A}})_{t\geq 0}$ of contractions on $L^2(0, 1)$.
\end{proposition}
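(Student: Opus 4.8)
The plan is to realise $\mathcal{A}$ through the symmetric sesquilinear form naturally attached to it and then invoke the theory of closed forms together with the spectral theorem. Integrating by parts against a test function $v\in H^1_\alpha(0,1)$, using $v(1)=0$ and the Robin relation $(x^\alpha u_x)(0)=\tfrac{\beta_0}{\beta_1}u(0)$ (legitimate since $\beta_1\neq 0$ by Hypothesis \ref{hypo1}), one is led to
\[
a(u,v):=\int_0^1 x^\alpha u_x v_x\,\d x+\frac{\beta_0}{\beta_1}\,u(0)v(0),\qquad u,v\in H^1_\alpha(0,1),
\]
which satisfies $a(u,v)=-\langle \mathcal{A}u,v\rangle_{L^2}$ for $u\in D(\mathcal{A})$. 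I would first check that $a$ is symmetric, continuous and coercive on $H^1_\alpha(0,1)$. The self-adjoint operator associated with a densely defined, symmetric, closed, bounded-below form is then $-\mathcal{A}$; since $\beta_0\beta_1\ge 0$ forces $\tfrac{\beta_0}{\beta_1}\ge 0$, the form is in fact non-negative, so $-\mathcal{A}$ is a non-negative self-adjoint operator, $\mathcal{A}$ is dissipative, and $\{e^{t\mathcal{A}}\}_{t\ge 0}=\{e^{-t(-\mathcal{A})}\}_{t\ge 0}$ is a strongly continuous (indeed analytic) semigroup of contractions by the spectral theorem.

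The analytic heart of the argument is a degenerate Hardy--Poincaré estimate that makes the boundary term above meaningful. Because $\alpha\in(0,1)$ and $u(1)=0$, writing $u(x)=-\int_x^1 u_y\,\d y$ and splitting $u_y=y^{-\alpha/2}\,(y^{\alpha/2}u_y)$ gives, via Cauchy--Schwarz and $\int_0^1 y^{-\alpha}\,\d y<\infty$, the bound $\norm{u}_{L^\infty(0,1)}\le C\,\norm{x^{\alpha/2}u_x}_{L^2(0,1)}$. This simultaneously furnishes (i) the trace estimate $|u(0)|\le C\,\norm{x^{\alpha/2}u_x}_{L^2}$, which yields continuity of $a$, (ii) the Poincaré inequality $\norm{u}_{L^2}\le C\,\norm{x^{\alpha/2}u_x}_{L^2}$, which, combined with $a(u,u)\ge \int_0^1 x^\alpha u_x^2\,\d x$, gives coercivity $a(u,u)\ge c\,\norm{u}_{H^1_\alpha}^2$ with no spectral shift. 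Density of $D(\mathcal{A})$ follows since it contains $C_c^\infty((0,1))$, which is dense in $L^2(0,1)$.

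It then remains to identify the abstract form domain with $D(\mathcal{A})$. Given $u\in D(-\mathcal{A})$ with $a(u,v)=\langle f,v\rangle_{L^2}$ for all $v\in H^1_\alpha(0,1)$, testing first against $v\in C_c^\infty((0,1))$ shows $-(x^\alpha u_x)_x=f$ in the sense of distributions, whence $x^\alpha u_x\in H^1(0,1)$ and $u\in H^2_\alpha(0,1)$; testing then against general $v$ and integrating by parts recovers the Robin identity $\beta_0 u(0)-\beta_1(x^\alpha u_x)(0)=0$. Thus the form operator coincides with $(\mathcal{A},D(\mathcal{A}))$, and self-adjointness, non-negativity of $-\mathcal{A}$, density and contractivity of $e^{t\mathcal{A}}$ all transfer to the stated operator. (Alternatively, one may finish by Lumer--Phillips: dissipativity of $\mathcal{A}$ is immediate from $a\ge 0$, and surjectivity of $\lambda I-\mathcal{A}$ for $\lambda>0$ follows from Lax--Milgram applied to $a+\lambda\langle\cdot,\cdot\rangle_{L^2}$.)

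The main obstacle I anticipate is entirely concentrated at the degenerate endpoint $x=0$: establishing that $H^1_\alpha(0,1)\hookrightarrow C[0,1]$ with a controlled trace $u(0)$, so that the Robin term is well defined and the form is continuous and coercive, and then upgrading weak solutions to $H^2_\alpha$-regularity while correctly reading off the boundary condition at the point where the diffusion coefficient $x^\alpha$ vanishes. The restriction $\alpha\in(0,1)$ (weak degeneracy) is exactly what keeps $x^{-\alpha}$ integrable near $0$ and thereby keeps the pointwise trace and the Robin condition admissible; this is the delicate threshold that the whole construction rests upon.
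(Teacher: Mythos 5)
Your proposal is correct, and it is worth noting that the paper itself offers no proof of this proposition: it simply recalls the result from the reference [Akil--Fragnelli--Ismail, \emph{Appl. Math. Optim.} 91 (2025)], where well-posedness is established (in a more general non-autonomous setting) by essentially the same form-theoretic route you take. Your argument is the standard and expected one, and it is complete: the sesquilinear form $a(u,v)=\int_0^1 x^{\alpha}u_xv_x\,\d x+\tfrac{\beta_0}{\beta_1}u(0)v(0)$ is the right object; the degenerate Hardy--Poincar\'e/trace bound $\|u\|_{L^\infty}\le C\|x^{\alpha/2}u_x\|_{L^2}$, which hinges exactly on $\int_0^1 y^{-\alpha}\,\d y<\infty$ for $\alpha\in(0,1)$, correctly delivers both continuity and unshifted coercivity (using $\beta_0/\beta_1\ge 0$ from Hypothesis \ref{hypo1}); and your identification of the form operator with $(\mathcal{A},D(\mathcal{A}))$ is sound, since $(x^{\alpha}u_x)_x\in L^2$ together with $x^{\alpha}u_x=x^{\alpha/2}\,(x^{\alpha/2}u_x)\in L^2$ gives $x^{\alpha}u_x\in H^1(0,1)\hookrightarrow C[0,1]$, so the trace $(x^{\alpha}u_x)(0)$ exists and the Robin condition is read off by testing against any $v\in H^1_{\alpha}$ with $v(0)\neq 0$. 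One small but genuinely useful point: the proposition's wording ``non-negative'' is, strictly speaking, a sign slip for the operator $\mathcal{A}u=(x^{\alpha}u_x)_x$ as defined, since $\langle\mathcal{A}u,u\rangle=-a(u,u)\le 0$; your formulation (that $-\mathcal{A}$ is the non-negative self-adjoint operator associated with $a$, whence $e^{t\mathcal{A}}$ is an analytic contraction semigroup) is the correct reading, and your Lumer--Phillips alternative via Lax--Milgram for $a+\lambda\langle\cdot,\cdot\rangle_{L^2}$ is equally valid.
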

On the other hand, system mpulsive Cauchy problem \eqref{1.1}  can be written as the following impulsive Cauchy problem 
\begin{empheq}[left = \empheqlbrace]{alignat=2} \label{ACP}
\begin{aligned}
&\partial_t \psi(t) = \mathcal{A} \psi(t), && \quad \text{in } (0, T) \setminus \{\tau\}, \\
&\psi(\tau) = \psi(\tau^-) + 1_\omega h, && \\
&\psi(0) = \psi_0. &&
\end{aligned}
\end{empheq}
For $\psi_0\in L^2(0,1)$ , the system \eqref{ACP} has a unique mild solution given by
\begin{align*}
    \psi(t) = e^{t\mathcal{A}} \psi_0 + \mathbf{1}_{\{t \geq \tau\}}\, e^{(t - \tau)\mathcal{A}} 1_\omega(t) h, \quad t \in (0, T).
\end{align*}

\section{Logarithmic Convexity via the Carleman commutator approach}\label{section3}
In this section, we apply the Carleman commutator approach to prove the observation estimate at one point in time \eqref{ob}. To begin, we introduce the suitable weight function.
	
	\subsection{The weight function}
	Consider the smooth weight function $\phi$ given by
	\begin{equation}\label{phi}
		\phi(x,t)=s \,\dfrac{\varphi(x)}{\theta(t)},
	\end{equation}
	where \[\theta(t)=T-t+h \;\;\text{and}\;\; \varphi(x)=-\dfrac{1}{2(2-\alpha)^2} x^{2-\alpha} \;\;\text{for all} \;(x,t) \in (0,1)\times(0,T) ,\] 
    with $0<s<1$ and $h \in \left(0, 1 \right]$.

\subsection{Proof of the observation inequality}
\begin{proof}[Proof of Theorem \ref{thm1}]
The proof strategy for Lemma \ref{lem1}, inspired by the methodologies developed in \cite{BuffePhung2022, Phung2018}, unfolds as follows. Step 1 involves a change of variables and the introduction of symmetric and antisymmetric components. In Step 2, a frequency function adapted to our global framework is constructed. Assuming hypothesis \ref{hypo1} is satisfied, Steps 3 and 4 focus on computing the Carleman commutator and establishing essential estimates. Step 5 addresses the solution of a system of ordinary differential inequalities. Step 6 identifies the control region \( \omega \times T \), and Step 7 uses the specific form of the chosen weight function to derive the target estimate in equation \eqref{ob}.

\noindent \textbf{Step 1. Symmetric and antisymmetric parts.} Let us start by considering $u$ as the classical solution of the adjoint system \eqref{1.2}, and define the function
\[w= e^{\frac{\Phi}{2}} u, \qquad \text{for} \;s>0.\]
This yields the following system
  \begin{empheq}[left = \empheqlbrace]{alignat=2} \label{1.3}
		\begin{aligned}
			&\partial_t (w e^{-\frac{\Phi}{2}})(x,t) - (x^{\alpha} (w e^{-\frac{\Phi}{2}})_x)_x (x,t) = 0, && \quad\text { in } (0,1) \times(0, T) ,\\ 
            &\beta_0 (w e^{-\frac{\Phi}{2}}) (0,t) -\beta_1 (x^{\alpha} (w e^{-\frac{\Phi}{2}})_x)(0,t)=0, && \quad\text { in } (0, T) ,\\ 
            &w(1,t)=0, && \quad\text { in } (0, T) ,\\ 
			&w(x,0)= w_0(x),   && \quad \text{ on } (0,1).
		\end{aligned}
	\end{empheq}
It can be noted that the equation at the boundary $x=0$ takes the form
\begin{align}\label{Robine}
      \left(x^{\alpha} w_x\right)(0,t)=\dfrac{\beta_0}{\beta_1}w(0,t), \qquad \text{for all}\; t\in(0,T).
\end{align}
Now, let us introduce the operator \(L_1\) as follows
\begin{align*}
    L_1 w 
    := &\partial_t (w e^{-\frac{\Phi}{2}}) - (x^{\alpha} (w e^{-\frac{\Phi}{2}})_x)_x,
\end{align*}
such that
\[L_1 w=0.\]
Then
\begin{align*}
L_1 w=& w_t -\dfrac{1}{2} \Phi_t w-
\frac{1}{4}x^{\alpha}\Phi_{x}^2 w +  x^{\alpha} \Phi_{x} w_x + \frac{1}{2}(x^{\alpha}\Phi_{x})_{x} w - (x^{\alpha} w_{x})_{x}.
\end{align*}
Consider the operator $L$
\begin{equation*}
L w=\eta w  -  x^{\alpha} \Phi_{x} w_x - \frac{1}{2}(x^{\alpha}\Phi_{x})_{x} w+(x^{\alpha} w_{x})_{x} 
\end{equation*}
with $$\eta=\dfrac{1}{2} \Phi_t +\frac{1}{4}x^{\alpha}\Phi_{x}^2.$$
As result, one can obtain that
\[\partial_t w - L w=0.\]
Now, let us calculate the adjoint operator of \(L\). For any $ z\in H^1_{\alpha}(0,1)$, one has
\begin{align*}
    \left\langle L w, z\right\rangle
    =& \int_0^1 \eta w z \,\mathrm{d}x - \dfrac{1}{2} \int_0^1 (x^{\alpha} \Phi_x)_x w z \,\mathrm{d}x - \int_0^1 x^{\alpha} \Phi_x w_x z \,\mathrm{d}x + \int_0^1 (x^{\alpha} w_x)_x z \,\mathrm{d}x\\
    =&\left[x^{\alpha} w_x z - x^{\alpha} \Phi_x w z\right]_0^1 + \int_0^1\eta w z \,\mathrm{d}x+ \dfrac{1}{2} \int_0^1 (x^{\alpha} \Phi_x)_x w z \,\mathrm{d}x  + \int_0^1 x^{\alpha} \Phi_x z_x w \,\mathrm{d}x\\
    & -\int_0^1 x^{\alpha} w_x z_x \,\mathrm{d}x\\
    =&\left[x^{\alpha} w_x z - x^{\alpha} z_x w - x^{\alpha} \Phi_x w z\right]_0^1+\int_0^1 \eta w z \,\mathrm{d}x + \dfrac{1}{2} \int_0^1 (x^{\alpha} \Phi_x)_x w z \,\mathrm{d}x+ \int_0^1x^{\alpha} \Phi_x z_x w \,\mathrm{d}x\\
    & +\int_0^1 (x^{\alpha} z_x)_x w \,\mathrm{d}x.
\end{align*}
Due to the fact that $w(1,t)=z(1,t)=0$ for all $t \in (0,T)$, then
\[\left\lbrace x^{\alpha} w_x z - x^{\alpha} z_x w - x^{\alpha} \Phi_x w z \right\rbrace_{|x=1}=0.\]
Similarly, using \eqref{Robine} and $\beta_1 \neq 0$ which enables us to write the boundary terms at $x=0$ as follows
\begin{align*}
    \left\lbrace x^{\alpha} w_x z - x^{\alpha} z_x w - x^{\alpha} \Phi_x w z \right\rbrace_{|x=0}
    =&\left\lbrace x^{\alpha} w_x z - x^{\alpha} z_x w - \dfrac{s}{\theta} x^{\alpha} \varphi_x w z \right\rbrace_{|x=0}\\
    =&\left\lbrace x^{\alpha} w_x z - x^{\alpha} z_x w + \dfrac{s}{2(2-\alpha)\theta} x w z \right\rbrace_{|x=0}\\
    =&\left\lbrace \dfrac{\beta_0}{\beta_1 } w z - \dfrac{\beta_0}{\beta_1 } z w + \dfrac{s}{2(2-\alpha)\theta} x w z \right\rbrace_{|x=0}\\
    =&0.
\end{align*}
This implies that 
\[\left\langle L w, z\right\rangle=\left\langle  w, L^{*} z\right\rangle.\]
Thus, it follows that
\[L^{*}w = \eta w + \frac{1}{2} (x^{\alpha} \Phi_x)_x w + x^{\alpha} \Phi_x w_x +(x^{\alpha} w_x)_x\]
Based on this, we define the following operator
\[\mathcal{A}w:= \dfrac{L w  - L^{*}w}{2}= - x^{\alpha} \Phi_x w_x - \frac{1}{2} (x^{\alpha} \Phi_x)_x w\]
which is antisymmetric on $H^1_{\alpha}(0,1)$. Similarly, we define the operator 
\[\mathcal{S}w:= \dfrac{Lw  + L^{*}w}{2}= \eta w + (x^{\alpha} w_x)_w\]
that is symmetric on $H^1_{\alpha}(0,1)$. Thus,
\begin{equation}\label{eqqq1}
    \partial_t w = \mathcal{S}w + \mathcal{A}w.
\end{equation}

\noindent\textbf{Step 2. Energy estimates.}	Considering the antisymmetry of $\mathcal{A}$, one can obtain \[\left\langle \mathcal{A}w , w \right\rangle =0, \qquad \text{for all}\;\; w\in D(A).\]
    Moreover, by multiplying \eqref{eqqq1} by $w$ and integrating over $(0,1)$, one gets that for all $w\in D(A)$,
	\begin{equation}\label{eqqq3}
		\dfrac{1}{2} \partial_t \parallel w \parallel^2 + \left\langle -\mathcal{S}w , w \right\rangle =0.
	\end{equation}
	We define the frequency function 
	\begin{equation*}
		\mathcal{N}(t)=\dfrac{\left\langle - \mathcal{S}w ,w \right\rangle}{\parallel w \parallel^2}.
	\end{equation*}
	Thus, \eqref{eqqq3} becomes
	\begin{equation}\label{eqqq4}
		\dfrac{1}{2} \partial_t \parallel w \parallel^2 + \mathcal{N}(t) \parallel w \parallel^2 = 0.
	\end{equation}
	Now, we continue by estimating the derivative of the frequency function $\mathcal{N}$ as follows
    \begin{align}\label{eqqq5}
        \partial_t \mathcal{N}(t) \parallel w \parallel^2 
        \leq &\left\langle -\left(\mathcal{S}'  + [\mathcal{S} , \mathcal{A} ] \right) w , w \right\rangle\nonumber\\
        &+\left[x^{\alpha} (\mathcal{A} w)_x w- x^{\alpha} w_x (\mathcal{A} w)-  x^{\alpha} \Phi_x (\mathcal{S} w) w\right]_0^1.
    \end{align}
    To derive this estimate, we begin by evaluating \(\partial_t \left\langle \mathcal{S}w , w \right\rangle\). Next, by integrating by parts, we have
    \begin{align}\label{eqc}
        \left\langle \mathcal{S}w , w \right\rangle
        = &\int_0^1  \left( (x^{\alpha} w_x)_x + \eta w \right) w \,\mathrm{d}x\nonumber\\
        =&\left[ x^{\alpha} w_x w\right]_0^1-\int_0^1 x^{\alpha} w_x^2\,\mathrm{d}x + \int_0^1 \eta w^2 \,\mathrm{d}x\nonumber\\
        =&-\{ x^{\alpha} w_x w\}_{|x=0}-\int_0^1 x^{\alpha} w_x^2\,\mathrm{d}x + \int_0^1 \eta w^2 \,\mathrm{d}x\nonumber\\
        =&-\{ \dfrac{\beta_0}{\beta_1} w^2\}_{|x=0}-\int_0^1 x^{\alpha} w_x^2\,\mathrm{d}x + \int_0^1 \eta w^2 \,\mathrm{d}x.
    \end{align}
    Hence, we infer that
    \begin{align*}
        &\partial_t \left\langle \mathcal{S}w , w \right\rangle\\
        =&-2 \dfrac{\beta_0}{\beta_1} \{ \partial_t w w\}_{|x=0} -2\int_0^1 x^{\alpha} \partial_t w_x w_x \,\mathrm{d}x + \int_0^1 \partial_t \eta w^2 \,\mathrm{d}x + 2 \int_0^1 \eta \partial_t w w \,\mathrm{d}x\\
        =&-2 \dfrac{\beta_0}{\beta_1} \{ \partial_t w w\}_{|x=0} -2 \left[ x^{\alpha} w_x \partial_t w\right]_0^1 + 2\int_0^1 (x^{\alpha} w_x)_x \partial_t w  \,\mathrm{d}x+2 \int_0^1 \eta \partial_t w w \,\mathrm{d}x + \int_0^1 \partial_t \eta w^2 \,\mathrm{d}x \\
        =&-2 \dfrac{\beta_0}{\beta_1} \{ \partial_t w w\}_{|x=0} -2 \left\lbrace x^{\alpha} w_x \partial_t w\right\rbrace_{|x=1} + 2 \dfrac{\beta_0}{\beta_1} \left\lbrace \partial_t w w \right\rbrace_{|x=0} + 2\int_0^1 (x^{\alpha} w_x)_x \partial_t w  \,\mathrm{d}x+2 \int_0^1 \eta \partial_t w w \,\mathrm{d}x\\
        &+ \int_0^1 \partial_t \eta w^2 \,\mathrm{d}x \\
        =&2\left\langle \mathcal{S} w , \partial_t w\right\rangle + \int_0^1 \partial_t \eta w^2 \,\mathrm{d}x.
    \end{align*}
    From the definition \(\mathcal{S}' w := \partial_t \eta w\) and the previous equation, we obtain
    \begin{equation*}
        \partial_t \left\langle \mathcal{S}w , w \right\rangle= 2\left\langle \mathcal{S}w , \partial_t w \right\rangle + \left\langle \mathcal{S}' w , w \right\rangle.
    \end{equation*}
    By combining this result with \eqref{eqqq1} and applying the Cauchy-Schwarz inequality, we proceed to
	\begin{align}\label{eqqq9}
		&\partial_t \mathcal{N}(t) \parallel w \parallel^4\nonumber\\
		=& \partial_t \left\langle - \mathcal{S}w , w \right\rangle \parallel w \parallel^2 + 2 \left\langle \mathcal{S}w , w \right\rangle \left\langle \partial_t w , w \right\rangle  \nonumber\\
        =& \left( \left\langle -\mathcal{S}' w, w \right\rangle - 2 \left\langle \mathcal{S}w , \partial_t w \right\rangle \right) \parallel w \parallel^2 + 2
		\left\langle \mathcal{S}w , w \right\rangle \left\langle \partial_t w , w \right\rangle \nonumber\\
        = &\left( \left\langle -\mathcal{S}' w, w \right\rangle - 2 \left\langle \mathcal{S}w , \mathcal{A}w \right\rangle \right) \parallel w \parallel^2 + 2\left( \left\lvert\left\langle \mathcal{S}w , w \right\rangle \right\rvert^2 -\| \mathcal{S}w \|^2 \parallel w \parallel^2\right)\nonumber\\
        \leq &\left( \left\langle -\mathcal{S}' w, w \right\rangle - 2 \left\langle \mathcal{S}w , \mathcal{A}w \right\rangle \right) \parallel w \parallel^2.
    \end{align}
	Next, we assert the following equality
	\begin{align}\label{starequality1}
		2 \left\langle \mathcal{S} w, \mathcal{A }w \right\rangle 
        =& \left\langle [\mathcal{S} , \mathcal{A} ]w , w\right\rangle +\left[ x^{\alpha} w_x (\mathcal{A} w)- x^{\alpha} (\mathcal{A} w)_x w + x^{\alpha} \Phi_x (\mathcal{S} w) w\right]_0^1,
	\end{align}
    where the commutator is defined as 
	\[\left[\mathcal{S} , \mathcal{A} \right]w:= \mathcal{S A}w - \mathcal{A S}w.\]
	Indeed, using the definition of \(\mathcal{S}w\) and \(\mathcal{A}w\) one has by integrating by parts that
	\begin{align}\label{eqqq6}
		&\left\langle \mathcal{S A} w ,w \right\rangle\nonumber\\
		=&\int_0^1 \left(\eta \mathcal{A} w + (x^{\alpha} (\mathcal{A} w)_x)_x\right) w \,\mathrm{d}x\nonumber\\
		=&\left[ x^{\alpha} (\mathcal{A} w)_x w \right]_0^1+\int_0^1 \eta (\mathcal{A} w) w \,\mathrm{d}x - \int_0^1 x^{\alpha} (\mathcal{A} w)_x w_x \,\mathrm{d}x \nonumber\\
		=&\left[ x^{\alpha} (\mathcal{A} w)_x w - x^{\alpha} w_x (\mathcal{A} w) \right]_0^1+\int_0^1 \eta w (\mathcal{A} w) \,\mathrm{d}x + \int_0^1 (x^{\alpha} w_x)_x (\mathcal{A} w) \,\mathrm{d}x \nonumber\\
        =&\left[ x^{\alpha} (\mathcal{A} w)_x w - x^{\alpha} w_x (\mathcal{A} w) \right]_0^1+ \left\langle \mathcal{S} W ,\mathcal{A}W \right\rangle.
	\end{align}
	On the other hand, one has
	\begin{align}\label{eqqq7}
		&\left\langle \mathcal{A S}w, w\right\rangle\nonumber\\
		=&\int_0^1 \left((-x^{\alpha} \Phi_x (\mathcal{S} w)_x -\dfrac{1}{2} (x^{\alpha} \Phi_x)_x (\mathcal{S} w) \right) w\,\mathrm{d}x \nonumber\\
        =&\left[ -x^{\alpha} \Phi_x (\mathcal{S} w) w\right]_0^1+\int_0^1 x^{\alpha} \Phi_x w_x (\mathcal{S} w) \,\mathrm{d}x + \dfrac{1}{2} \int_0^1 (x^{\alpha} \Phi_x)_x (\mathcal{S} w) w\,\mathrm{d}x\nonumber\\
        =&\left[ -x^{\alpha} \Phi_x (\mathcal{S} w) w\right]_0^1-\left\langle \mathcal{S}w, \mathcal{A} w \right\rangle .
	\end{align}
	By combining \eqref{eqqq6} and \eqref{eqqq7}, the result is established.\\
 
    Finally, using \eqref{eqqq9} and \eqref{starequality1}, we can conclude the desired inequality \eqref{eqqq5}.\\

	\noindent\textbf{Step 3. Calculating the Carleman commutator.} Next, we move on to compute the Carleman commutator $\left[\mathcal{S} , \mathcal{A} \right]W$, beginning with
	\begin{align}\label{AS}
		\mathcal{A S}w
        =&-x^{\alpha} \Phi_x \left(\mathcal{S} w\right)_x - \dfrac{1}{2} (x^{\alpha} \Phi_x)_x (\mathcal{S} w)\nonumber\\
        =&-x^{\alpha} \Phi_x \left(\eta w + (x^{\alpha} w_x)_x\right)_x - \dfrac{1}{2} (x^{\alpha} \Phi_x)_x (x^{\alpha} w_x)_x-\dfrac{1}{2} \eta (x^{\alpha} \Phi_x)_x w \nonumber\\
        =&-x^{\alpha} \Phi_x (x^{\alpha} w_x)_{xx} - \dfrac{1}{2} (x^{\alpha} \Phi_x)_x (x^{\alpha} w_x)_x-\eta x^{\alpha} \Phi_x w_x - \dfrac{1}{2} (x^{\alpha} \Phi_x)_x \eta w - x^{\alpha} \Phi_x \eta_x w.
	\end{align}
	In a similar manner, one has
	\begin{align}\label{SA}
		\mathcal{S A}w
		=&\eta\left(\mathcal{A} w\right) + (x^{\alpha} (\mathcal{A} w)_x)_x\nonumber\\
        =&-\left( x^{\alpha} \left( x^{\alpha} \Phi_x w_x + \dfrac{1}{2} (x^{\alpha} \Phi_x)_x w \right)_x\right)_x - \eta x^{\alpha} \Phi_x w_x -\dfrac{1}{2} \eta (x^{\alpha} \Phi_x)_x w.
	\end{align}
	Combining this with the expression for \eqref{AS} in \eqref{AS}, we derive
	\begin{align}\label{eqqq10}
		[\mathcal{S} , \mathcal{A}]w
		=&-\left( x^{\alpha} \left( x^{\alpha} \Phi_x w_x + \dfrac{1}{2} (x^{\alpha} \Phi_x)_x w \right)_x\right)_x +x^{\alpha} \Phi_x (x^{\alpha} w_x)_{xx} + \dfrac{1}{2} (x^{\alpha} \Phi_x)_x (x^{\alpha} w_x)_x + x^{\alpha} \Phi_x \eta_x w. 
	\end{align}
	From the provided definition of $\mathcal{S}' W$, one has
    \begin{align}
        \mathcal{S}' w
        =&\eta_t w.
    \end{align}
    Combining this expression with the identity \eqref{eqqq10} for the commutator $[\mathcal{S} , \mathcal{A}]$, we arrive at the following formulation
	\begin{align}\label{eq16}
		&-(S'+[\mathcal{S} , \mathcal{A}])w\nonumber\\
		=&\left( x^{\alpha} \left( x^{\alpha} \Phi_x w_x + \dfrac{1}{2} (x^{\alpha} \Phi_x)_x w \right)_x\right)_x -x^{\alpha} \Phi_x (x^{\alpha} w_x)_{xx} - \dfrac{1}{2} (x^{\alpha} \Phi_x)_x (x^{\alpha} w_x)_x - (x^{\alpha} \Phi_x \eta_x + \partial_t \eta) w. 
	\end{align}
	Next, we concentrate on the boundary terms. In fact, it holds that
	\begin{align}\label{eq17}
		&\left[x^{\alpha} (\mathcal{A} w)_x w- x^{\alpha} w_x (\mathcal{A} w)-  x^{\alpha} \Phi_x (\mathcal{S} w) w\right]_0^1\nonumber\\
        =&\left\lbrace x^{\alpha} (\mathcal{A} w)_x w- x^{\alpha} w_x (\mathcal{A} w)-  x^{\alpha} \Phi_x (\mathcal{S} w) w \right\rbrace_{|x=1}- \left\lbrace x^{\alpha} (\mathcal{A} w)_x w- x^{\alpha} w_x (\mathcal{A} w)-  x^{\alpha} \Phi_x (\mathcal{S} w) w \right\rbrace_{|x=0}\nonumber\\
        =&-\left\lbrace x^{\alpha} \left(x^{\alpha} \Phi_x w_x + \dfrac{1}{2}(x^{\alpha} \Phi_x)_x w\right)_x w- x^{\alpha} w_x \left(x^{\alpha} \Phi_x w_x +\dfrac{1}{2}(x^{\alpha} \Phi_x)_x w\right)\right.\nonumber\\
        &\left.+  x^{\alpha} \Phi_x \left((x^{\alpha} w_x)_x + \eta w\right) w \right\rbrace_{|x=1} +\left\lbrace x^{\alpha} \left(x^{\alpha} \Phi_x w_x + \dfrac{1}{2}(x^{\alpha} \Phi_x)_x w\right)_x w\right.\nonumber\\
        &\left.- x^{\alpha} w_x \left(x^{\alpha} \Phi_x w_x +\dfrac{1}{2}(x^{\alpha} \Phi_x)_x w\right)+  x^{\alpha} \Phi_x \left((x^{\alpha} w_x)_x + \eta w\right) w \right\rbrace_{|x=0}.
	\end{align}
	The two equalities \eqref{eq16} and \eqref{eq17} imply that
	\begin{align}\label{eq18}
		&\left\langle -(S'+[\mathcal{S} , \mathcal{A}])w , w \right\rangle+\left[x^{\alpha} (\mathcal{A} w)_x w- x^{\alpha} w_x (\mathcal{A} w)-  x^{\alpha} \Phi_x (\mathcal{S} w) w\right]_0^1\nonumber\\
		=&-\left\lbrace x^{\alpha} \left(x^{\alpha} \Phi_x w_x + \dfrac{1}{2}(x^{\alpha} \Phi_x)_x w\right)_x w- x^{\alpha} w_x \left(x^{\alpha} \Phi_x w_x +\dfrac{1}{2}(x^{\alpha} \Phi_x)_x w\right)\right.\nonumber\\
        &\left.+  x^{\alpha} \Phi_x \left((x^{\alpha} w_x)_x + \eta w\right) w \right\rbrace_{|x=1} +\left\lbrace x^{\alpha} \left(x^{\alpha} \Phi_x w_x + \dfrac{1}{2}(x^{\alpha} \Phi_x)_x w\right)_x w\right.\nonumber\\
        &\left.- x^{\alpha} w_x \left(x^{\alpha} \Phi_x w_x +\dfrac{1}{2}(x^{\alpha} \Phi_x)_x w\right)+  x^{\alpha} \Phi_x \left((x^{\alpha} w_x)_x + \eta w\right) w \right\rbrace_{|x=0}\nonumber\\
		&+ \int_0^1 \left( x^{\alpha} \left( x^{\alpha} \Phi_x w_x + \dfrac{1}{2} (x^{\alpha} \Phi_x)_x w \right)_x\right)_x w \,\mathrm{d}x - \int_0^1  x^{\alpha} \Phi_x (x^{\alpha} w_x)_{xx} w \,\mathrm{d}x\nonumber\\
        &- \dfrac{1}{2} \int_0^1  (x^{\alpha} \Phi_x)_x (x^{\alpha} w_x)_x w \,\mathrm{d}x - \int_0^1  \left(x^{\alpha} \Phi_x \eta_x + \eta_t\right) w^2 \,\mathrm{d}x.
	\end{align}
	To enhance the clarity of the previous identity, we begin by performing integration by parts. First, we observe that
	\begin{align}\label{ip1}
		&\int_0^1 \left( x^{\alpha} \left( x^{\alpha} \Phi_x w_x + \dfrac{1}{2} (x^{\alpha} \Phi_x)_x w \right)_x\right)_x w \,\mathrm{d}x\nonumber\\
		=&\left[ \,x^{\alpha} \left( x^{\alpha} \Phi_x w_x + \dfrac{1}{2} (x^{\alpha} \Phi_x)_x w \right)_x w\, \right]_0^1 - \int_0^1 x^{\alpha} w_x \left( x^{\alpha} \Phi_x w_x + \dfrac{1}{2} (x^{\alpha} \Phi_x)_x w \right)_x  \,\mathrm{d}x\nonumber\\
        =&\left\lbrace x^{\alpha}  \left( x^{\alpha} \Phi_x w_x + \dfrac{1}{2} (x^{\alpha} \Phi_x) w \right)_x w\, \right\rbrace_{x=1} - \left\lbrace x^{\alpha}  \left( x^{\alpha} \Phi_x w_x + \dfrac{1}{2} (x^{\alpha} \Phi_x) w \right)_x w\, \right\rbrace_{x=0} \nonumber\\
        &- \int_0^1 x^{\alpha} \phi_x (x^{\alpha} w_x)_x w_x \,\mathrm{d}x- \dfrac{1}{2} \int_0^1 (x^{\alpha} \phi_x)_{xx} x^{\alpha} w_x w \,\mathrm{d}x - \int_0^1 \left( \,x^{\alpha} \phi_{xx} + \dfrac{1}{2} (x^{\alpha} \phi_x)_x \,\right) x^{\alpha} w_x^2 \,\mathrm{d}x.
    \end{align}
	Secondly, one has
	\begin{align}\label{ip2}
		&\int_0^1  x^{\alpha} \Phi_x (x^{\alpha} w_x)_{xx} w \,\mathrm{d}x\nonumber\\
		=& \left[ \,x^{\alpha} \Phi_x (x^{\alpha} w_x)_x w \right]_0^1 - \int_0^1 (x^{\alpha} \Phi_x)_x (x^{\alpha} w_x)_x w \,\mathrm{d}x - \int_0^1 x^{\alpha} \Phi_x (x^{\alpha} w_x)_x w_x \,\mathrm{d}x\nonumber\\
        =&\left\lbrace x^{\alpha} \Phi_x (x^{\alpha} w_x)_x w \right\rbrace_{|x=1} - \left\lbrace x^{\alpha} \Phi_x (x^{\alpha} w_x)_x w \right\rbrace_{|x=0}  - \int_0^1 (x^{\alpha} \Phi_x)_x (x^{\alpha} w_x)_x w \,\mathrm{d}x\nonumber\\
        &- \int_0^1 x^{\alpha} \Phi_x (x^{\alpha} w_x)_x w_x \,\mathrm{d}x.
	\end{align}
	Using \eqref{eq18}, \eqref{ip1}, and \eqref{ip2}, along with the boundary condition $w(1,t)=0$ for any $t\in (0,T)$, we reach that
	\begin{align}\label{eq19}
		&\left\langle -(S'+[\mathcal{S} , \mathcal{A}])w , w \right\rangle+\left[x^{\alpha} (\mathcal{A} w)_x w- x^{\alpha} w_x (\mathcal{A} w)-  x^{\alpha} \Phi_x (\mathcal{S} w) w\right]_0^1\nonumber\\
		=&\left\lbrace -x^{\alpha} \Phi_x x^{\alpha} w_x^2 - \dfrac{1}{2} (x^{\alpha} \Phi_x)_x x^{\alpha} w_x w  + \eta x^{\alpha} \Phi_x w^2 \right\rbrace_{|x=0}+\left\lbrace x^{\alpha} \Phi_x x^{\alpha} w_x^2 \right\rbrace_{|x=1}\nonumber\\
		&+\dfrac{1}{2} \int_0^1 ( x^{\alpha} \Phi_x)_x ( x^{\alpha} w_x)_x w \,\mathrm{d}x - \dfrac{1}{2} \int_0^1  (x^{\alpha} \Phi_x)_{xx} x^{\alpha} w_x w \,\mathrm{d}x\nonumber\\
        &- \int_0^1  \left(x^{\alpha} \Phi_{xx} + \dfrac{1}{2} (x^{\alpha} \Phi_x)_x \right) x^{\alpha} w_x^2 \,\mathrm{d}x - \int_0^1  \left(x^{\alpha} \Phi_x \eta_x + \eta_t\right) w^2 \,\mathrm{d}x.
	\end{align}
    Thirdly, through the application of integrations by parts one can derive that
	\begin{align*}
		&\dfrac{1}{2} \int_0^1 ( x^{\alpha} \Phi_x)_x ( x^{\alpha} w_x)_x w \,\mathrm{d}x \nonumber\\
		=& \left[ \,\dfrac{1}{2}( x^{\alpha} \Phi_x)_x x^{\alpha} w_x w\, \right]_0^1 - \dfrac{1}{2} \int_0^1 ( x^{\alpha} \Phi_x)_{xx} x^{\alpha} w_x w \,\mathrm{d}x - \dfrac{1}{2} \int_0^1( x^{\alpha} \Phi_x)_x x^{\alpha} w_x^2 \,\mathrm{d}x.
    \end{align*}
     Inserting the above equality into \eqref{eq19} gives
	\begin{align*}
		&\left\langle -(S'+[\mathcal{S} , \mathcal{A}])w , w \right\rangle_{\mathbb{L}^2} +\left[x^{\alpha} (\mathcal{A} w)_x w- x^{\alpha} w_x (\mathcal{A} w)-  x^{\alpha} \Phi_x (\mathcal{S} w) w\right]_0^1\nonumber\\
		=&\left\lbrace -x^{\alpha} \Phi_x x^{\alpha} w_x^2 -  (x^{\alpha} \Phi_x)_x x^{\alpha} w_x w  + \eta x^{\alpha} \Phi_x w^2 + ( x^{\alpha} \Phi_x)_{xx} x^{\alpha} w^2\right\rbrace_{|x=0}\nonumber\\
        &+\left\lbrace x^{\alpha} \Phi_x x^{\alpha} w_x^2 \right\rbrace_{|x=1}+ \int_0^1  \left(x^{\alpha} (x^{\alpha} \Phi_x)_{xx}\right)_x  w^2 \,\mathrm{d}x - \int_0^1  \left(x^{\alpha} \Phi_{xx} + (x^{\alpha} \Phi_x)_x \right) x^{\alpha} w_x^2 \,\mathrm{d}x\nonumber\\
		&- \int_0^1  \left(x^{\alpha} \Phi_x \eta_x + \eta_t\right) w^2 \,\mathrm{d}x.
	\end{align*}
    On the other hand, the definition of $\eta$ enables us to write
    \begin{align*}
        \partial_t \eta + x^{\alpha} \phi_x \eta_x
        = &\dfrac{1}{2} \left( \partial_{tt} \phi + x^{\alpha} \phi_x \partial_t \phi_x \right) + \dfrac{1}{2} x^{\alpha} \phi_x \left( \partial_t \phi_x + \dfrac{1}{2} (x^{\alpha} \phi_x)_x \phi_x + \dfrac{1}{2} x^{\alpha} \phi_{xx} \phi_x \right)\\
        = &\dfrac{1}{2} \left( \partial_{tt} \phi + 2 x^{\alpha} \phi_x \partial_t \phi_x + \dfrac{1}{2} x^{\alpha} \phi_x^2 \left( (x^{\alpha} \phi_x)_x + x^{\alpha} \phi_{xx} \right) \right).
    \end{align*}
	Finally, it can be inferred from the definition of the weight function $\phi$ that
    \begin{align}\label{eqqq11}
		&\left\langle -(S'+[\mathcal{S} , \mathcal{A}])w , w \right\rangle +\left[x^{\alpha} (\mathcal{A} w)_x w- x^{\alpha} w_x (\mathcal{A} w)-  x^{\alpha} \Phi_x (\mathcal{S} w) w\right]_0^1\nonumber\\
		=&\dfrac{s}{\theta}\left\lbrace -x^{\alpha} \varphi_x x^{\alpha} w_x^2 -  (x^{\alpha} \varphi_x)_x x^{\alpha} w_x w   + ( x^{\alpha} \varphi_x)_{xx} x^{\alpha} w^2\right\rbrace_{|x=0}+\dfrac{s^2}{2\theta^3}\left\lbrace \left(\varphi  + \dfrac{s}{2} x^{\alpha} \varphi_x^2 \right)x^{\alpha} \varphi_x w^2 \right\rbrace_{|x=0}\nonumber\\
        &+\dfrac{s}{\theta}\left\lbrace x^{\alpha} \varphi_x x^{\alpha} w_x^2 \right\rbrace_{|x=1} + \dfrac{s}{2\theta} \int_0^1  \left(x^{\alpha}(x^{\alpha} \varphi_x)_{xx}\right)_x  w^2 \,\mathrm{d}x - \dfrac{s}{\theta}\int_0^1  \left(x^{\alpha} \varphi_{xx} + (x^{\alpha} \varphi_x)_x \right) x^{\alpha} w_x^2 \,\mathrm{d}x\nonumber\\
        &- \dfrac{s}{2\theta^3} \int_0^1  \left(2 \varphi + 2 s x^{\alpha} \varphi_x^2 + \dfrac{s^2}{2} x^{\alpha} \varphi_x^2 \left( (x^{\alpha} \varphi_x)_x + x^{\alpha} \varphi_{xx}\right)\right) w^2 \,\mathrm{d}x.
	\end{align}

\noindent\textbf{Step 4. Carleman commutator estimates.} In this step, we assert that for any $h\in (0, 1]$ along with the given choice $0<s<1$, the following inequality holds
	\begin{align}\label{eq22}
		&\left\langle -(S'+[\mathcal{S} , \mathcal{A}])w , w \right\rangle +\left[x^{\alpha} (\mathcal{A} w)_x w- x^{\alpha} w_x (\mathcal{A} w)-  x^{\alpha} \Phi_x (\mathcal{S} w) w\right]_0^1 \nonumber\\ 
        &\leq \dfrac{1+C_0}{\theta} \left\langle -\mathcal{S}w , w \right\rangle.
	\end{align}
Here, the constant $C_0(s)\in (0, 1)$ is independent of $h\in (0, 1]$. To validate this claim, using the definition of the function $\varphi$, we can easily verify the following results
	\begin{enumerate}
		\item $(x^{\alpha} \varphi_x)_{xx}=0$,
		\item $x^{\alpha} \varphi_x \left(\dfrac{s^2}{2} x^{\alpha} \varphi_x^2  +  \varphi\right)=\dfrac{(4-s)}{16(2-\alpha)^3} x^{3-\alpha}$,
        \item $(x^{\alpha} \varphi_x)_x + x^{\alpha} \varphi_{xx} = -\dfrac{1}{2}$,
		\item $ 2\varphi + 2s \,x^{\alpha} \varphi_x^2 + \dfrac{s^2}{4} x^{\alpha} \varphi_x^2 \left( (x^{\alpha} \varphi_x )_x  + x^{\alpha}\varphi_{xx}\right) =- \dfrac{(4-s)^2}{16(2-\alpha)^2} \,x^{2-\alpha}$.
	\end{enumerate}
	Consequently, by taking the above quantities into account, \eqref{eqqq11} becomes
	\begin{align}\label{ing1}
		&\left\langle -(S'+[\mathcal{S} , \mathcal{A}])w , w \right\rangle +\left[x^{\alpha} (\mathcal{A} w)_x w- x^{\alpha} w_x (\mathcal{A} w)-  x^{\alpha} \Phi_x (\mathcal{S} w) w\right]_0^1 \nonumber\\
		=&\dfrac{s}{2(2-\alpha)\theta}\left\lbrace x^{1+\alpha} w_x^2 + x^{\alpha} w_x w \right\rbrace_{|x=0} +\dfrac{s^2}{2 \theta^3}\left\lbrace  \dfrac{(4-s)}{16(2-\alpha)^3} x^{3-\alpha} w^2 \right\rbrace_{|x=0} \nonumber\\
        & -\dfrac{s}{2(2-\alpha)\theta}\left\lbrace x^{1+\alpha} w_x^2  \right\rbrace_{|x=1} 
		+ \dfrac{s}{2\theta}\int_0^1  x^{\alpha} w_x^2 \,\mathrm{d}x + \dfrac{s(4-s)^2}{32(2-\alpha)^2\theta^3} \int_0^1  x^{2-\alpha} w^2 \,\mathrm{d}x.
	\end{align}
	Now, let us assess the contribution of the boundary terms. Indeed, due to \(w \in D(A)\) one can observe that \eqref{Robine} holds true. Thus, one finds
	\begin{align}\label{eq25}
		\left\lbrace x^{1+\alpha} w_x^2 + x^{\alpha} w_x w \right\rbrace_{|x=0}
		=&\left\lbrace x^{1-\alpha} \left(\dfrac{\beta_0}{\beta_1}  w\right)^2 + \dfrac{\beta_0}{\beta_1}  w^2  \right\rbrace_{|x=0} .
	\end{align}
    As shown in \cite{Alabau-Boussouira2006}, we have the following limits hold
    \[\lim_{x \to 0} x^{3-\alpha}  w^2(x)=0 \qquad \text{and} \qquad \lim_{x \to 0} x^{1-\alpha} w^2(x)=0.\]
    As a result,
    \[\dfrac{s}{2(2-\alpha)\theta}\left\lbrace x^{1+\alpha} w_x^2 + x^{\alpha} w_x w \right\rbrace_{|x=0} +\dfrac{s^2}{2 \theta^3}\left\lbrace  \dfrac{(4-s)}{16(2-\alpha)^3} x^{3-\alpha} w^2 \right\rbrace_{|x=0}=\dfrac{\beta_0}{\beta_1}\dfrac{s}{2(2-\alpha)\theta} \{w^2\}_{|x=0}.\]
	Using the fact that $w(1,\cdot)=0$, one can observe that all terms vanish except for $\{- x^{1+\alpha} w_x^2\}$ at $x=1$, which is defined and negative. Consequently, \eqref{ing1} becomes
	\begin{equation}\label{eq28}
		\left\langle -(S'+[\mathcal{S} , \mathcal{A}])w , w \right\rangle +\left[x^{\alpha} (\mathcal{A} w)_x w- x^{\alpha} w_x (\mathcal{A} w)-  x^{\alpha} \Phi_x (\mathcal{S} w) w\right]_0^1 \leq \mathcal{Q}w,
	\end{equation}
	where
    \[\mathcal{Q}w :=\dfrac{\beta_0}{\beta_1}\dfrac{s}{2(2-\alpha)\theta} \{w^2\}_{|x=0}+ \dfrac{s}{2\theta}\int_0^1  x^{\alpha} w_x^2 \,\mathrm{d}x + \dfrac{s(4-s)^2}{32(2-\alpha)^2\theta^3} \int_0^1  x^{2-\alpha} w^2 \,\mathrm{d}x\]
    On the other hand, by considering \eqref{eqc} and the definition of $\eta$, one can see that
	\begin{align*}
		\left\langle \mathcal{S}w , w \right\rangle
		&=-\{ \dfrac{\beta_0}{\beta_1} w^2\}_{|x=0}-\int_0^1 x^{\alpha} w_x^2\,\mathrm{d}x + \int_0^1 \eta w^2 \,\mathrm{d}x \,\mathrm{d}x\nonumber\\
        &=-\{ \dfrac{\beta_0}{\beta_1} w^2\}_{|x=0}-\int_0^1 x^{\alpha} w_x^2\,\mathrm{d}x + \dfrac{s}{2 \theta^2}\int_0^1 \left( \varphi + \dfrac{s}{2} x^{\alpha} \varphi_x^2 \right) w^2 \,\mathrm{d}x \,\mathrm{d}x\nonumber\\
		&= -\{ \dfrac{\beta_0}{\beta_1} w^2\}_{|x=0}-\int_0^1 x^{\alpha} w_x^2\,\mathrm{d}x - \dfrac{s(4-s)}{16(2-\alpha)^2\theta^2} \int_0^1 x^{2-\alpha} w^2 \,\mathrm{d}x,
	\end{align*}
    which implies that
	\begin{equation}\label{ineg2}
		\dfrac{1}{\theta} \left\langle -\mathcal{S}w , w \right\rangle
		=\dfrac{1}{\theta}\left\lbrace \dfrac{\beta_0}{\beta_1} w^2\right\rbrace_{|x=0}+\dfrac{1}{\theta}\int_0^1 x^{\alpha} w_x^2\,\mathrm{d}x + \dfrac{s(4-s)}{16(2-\alpha)^2\theta^3} \int_0^1 x^{2-\alpha} w^2 \,\mathrm{d}x.
	\end{equation}
	Next, by considering the selected choice of $s$, it follows that we can write
	\begin{align*}
		\mathcal{Q}w
        =&\dfrac{1}{\theta}\left(\dfrac{\beta_0}{\beta_1}\dfrac{s}{2(2-\alpha)} \{w^2\}_{|x=0}+ \dfrac{s}{2}\int_0^1  x^{\alpha} w_x^2 \,\mathrm{d}x + \dfrac{s(4-s)^2}{32(2-\alpha)^2\theta^2} \int_0^1  x^{2-\alpha} w^2 \,\mathrm{d}x\right)\\
         =&\dfrac{4-s}{2\theta}\left(\dfrac{\beta_0}{\beta_1}\dfrac{s}{(2-\alpha)(4-s)} \{w^2\}_{|x=0}+ \dfrac{s}{4-s}\int_0^1  x^{\alpha} w_x^2 \,\mathrm{d}x + \dfrac{s(4-s)}{16(2-\alpha)^2\theta^2} \int_0^1  x^{2-\alpha} w^2 \,\mathrm{d}x\right)\\
         \leq&\dfrac{4-s}{2\theta}\left(\dfrac{\beta_0}{\beta_1} \{w^2\}_{|x=0}+ \int_0^1  x^{\alpha} w_x^2 \,\mathrm{d}x + \dfrac{s(4-s)}{16(2-\alpha)^2\theta^2} \int_0^1  x^{2-\alpha} w^2 \,\mathrm{d}x\right)\\
	\end{align*}
	Therefore, from \eqref{ineg2} and the previous estimate, it follows that
	\begin{equation}\label{eq35}
		\mathcal{Q}w \leq \dfrac{1+C_0}{\theta} \left\langle -\mathcal{S}w , w \right\rangle,
	\end{equation}
	with $C_0=\left(1-\dfrac{s}{2}\right)\in(0,1)$. When this is paired with \eqref{eq28}, it leads to the desired inequality \eqref{eq22}.

    \noindent\textbf{Step 5. Solving a system of ordinary differential inequalities.} The following differential inequalities hold
	\begin{empheq}[left = \empheqlbrace]{alignat=2} \label{1.4}
		\begin{aligned}
			& \dfrac{1}{2} \partial_t \parallel w(\cdot,t) \parallel^2 + \mathcal{N}(t) \parallel w(\cdot,t) \parallel^2  =0,\\
			& \partial_t \mathcal{N}(t) \leq \dfrac{1+C_0}{\theta} \mathcal{N}(t) .
		\end{aligned}
	\end{empheq}
	By using \cite[Proposition 3]{BuffePhung2022}, we solve the system of differential inequalities above and find that for any $0< t_1 < t_2 < t_3 \leq T$, one has
	\begin{equation*}
		\parallel w(\cdot,t_2) \parallel^{1+M} \leq \parallel w(\cdot,t_1) \parallel^M \parallel w(\cdot,t_3) \parallel,
	\end{equation*}
	where
	\begin{equation*}
		M=\dfrac{\displaystyle{\int_{t_2}^{t_3}} \dfrac{1}{(T-t+h)^{1+C_0}} \,dt}{\displaystyle{\int_{t_1}^{t_2}} \dfrac{1}{(T-t+h)^{1+C_0}} \,dt}.
	\end{equation*}
	In other words, we find that
	\begin{equation}\label{eqq5}
		\parallel u(\cdot,t_2) \,e^{\frac{\phi}{2}(\cdot,t_2)} \parallel^{1+M} \leq \parallel u(\cdot,t_1) \,e^{\frac{\phi}{2}(\cdot,t_1)} \parallel^M \parallel u(\cdot,t_3) \,e^{\frac{\phi}{2}(\cdot,t_3)} \parallel.
	\end{equation}

    \noindent\textbf{Step 6. Making appear the control domain $\omega\times\{T\}$.} In this step, we eliminate the weight function $\phi$ from the integrals in the inequality \eqref{eqq5} and introduce the local term. To proceed, we first establish the following lemma, which will prove useful.
	\begin{lemma}\label{lem3}
		Assume that Hypothesis \ref{hypo1} holds true. For any solution $u$ of the system \eqref{1.2}, the function $t \mapsto  \parallel u(\cdot,t) \parallel$ is decreasing on $(0,T)$.
	\end{lemma}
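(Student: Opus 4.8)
The plan is to show directly that $t \mapsto \parallel u(\cdot,t)\parallel^2$ is non-increasing by computing its time derivative from the equation, so that monotonicity of $\parallel u(\cdot,t)\parallel$ follows. Taking $u$ a (sufficiently regular) solution of \eqref{1.2}, multiplying the equation $\partial_t u-(x^{\alpha}u_x)_x=0$ by $u$ and integrating over $(0,1)$ gives
\[
\dfrac{1}{2}\partial_t \parallel u(\cdot,t)\parallel^2 = \int_0^1 (x^{\alpha}u_x)_x\, u \,\mathrm{d}x,
\]
and an integration by parts yields
\[
\dfrac{1}{2}\partial_t \parallel u(\cdot,t)\parallel^2 = \left[x^{\alpha}u_x u\right]_0^1 - \int_0^1 x^{\alpha} u_x^2 \,\mathrm{d}x .
\]

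Next I would dispose of the two boundary contributions. At $x=1$ the Dirichlet condition $u(1,t)=0$ annihilates the term $\{x^{\alpha}u_x u\}_{|x=1}$. At the degenerate endpoint $x=0$ I would invoke the Robin condition of \eqref{1.2}, which (since $\beta_1\neq 0$ and $u\in D(\mathcal{A})$) can be rewritten exactly as in \eqref{Robine}, namely $(x^{\alpha}u_x)(0,t)=\dfrac{\beta_0}{\beta_1}u(0,t)$. Hence $\{x^{\alpha}u_x u\}_{|x=0}=\dfrac{\beta_0}{\beta_1}u(0,t)^2$, which is precisely the boundary evaluation already performed in \eqref{eqc}. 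Substituting both contributions gives
\[
\dfrac{1}{2}\partial_t \parallel u(\cdot,t)\parallel^2 = -\dfrac{\beta_0}{\beta_1}u(0,t)^2 - \int_0^1 x^{\alpha} u_x^2 \,\mathrm{d}x .
\]

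Finally I would close the argument via the sign hypothesis: under Hypothesis \ref{hypo1} we have $\beta_0\beta_1\geq 0$ with $\beta_1\neq 0$, so $\dfrac{\beta_0}{\beta_1}=\dfrac{\beta_0\beta_1}{\beta_1^2}\geq 0$. Both terms on the right-hand side are therefore non-positive, giving $\partial_t \parallel u(\cdot,t)\parallel^2\leq 0$ and the claimed monotonicity. I expect the only delicate point to be the rigorous justification of the boundary term at the degenerate endpoint $x=0$: one must check that $x^{\alpha}u_x$ possesses a well-defined trace there (guaranteed by $x^{\alpha}u_x\in H^1(0,1)$ for $u\in D(\mathcal{A})$) and that the product $x^{\alpha}u_x u$ has the stated limit, in the same spirit as the weighted limits borrowed from \cite{Alabau-Boussouira2006} used earlier in the proof. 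A standard density argument, reducing to $u_0\in D(\mathcal{A})$ where all computations above are classical and then extending to general $u_0\in L^2(0,1)$ by continuity of the semigroup, makes the formal calculation fully rigorous.
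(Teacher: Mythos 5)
Your proposal is correct and follows essentially the same argument as the paper: differentiate $\parallel u(\cdot,t)\parallel^2$, integrate by parts, kill the boundary term at $x=1$ via the Dirichlet condition, rewrite the term at $x=0$ via the Robin condition as $\dfrac{\beta_0}{\beta_1}u(0,t)^2=\dfrac{\beta_0\beta_1}{\beta_1^2}u(0,t)^2\geq 0$, and conclude from Hypothesis \ref{hypo1}. Your version is in fact slightly more careful than the paper's (which contains a typo writing $x^{\alpha}u^2$ for the dissipation term $x^{\alpha}u_x^2$ and silently performs the formal computation), since you explicitly justify the trace of $x^{\alpha}u_x$ at the degenerate endpoint and the density argument from $D(\mathcal{A})$ to $L^2(0,1)$.
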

	
	\begin{proof}
		Through integration by parts, it can be derived that
		\begin{align*}
			\partial_t \left(\parallel u(\cdot,t) \parallel^2\right)
			&=2 \int_0^1  \partial_t u(x,t) u(x,t) \,\mathrm{d}x \\
			&=2 \int_0^1  (x^{\alpha} u_x)_x(x,t)  u(x,t) \, \mathrm{d}x \\
			&= \left[ 2 x^{\alpha} u_x u\right]_0^1 - 2 \int_0^1 x^{\alpha} u^2(x,t) \, \mathrm{d}x\\
			&=2\left\lbrace -x^{\alpha} u_x u\right\rbrace_{|x=0}- 2 \int_0^1 x^{\alpha} u^2(x,t) \, \mathrm{d}x\\
            &=-2\left\lbrace \dfrac{\beta_0 \beta_1}{\beta_1^2}u^2\right\rbrace_{|x=0}- 2 \int_0^1 x^{\alpha} u^2(x,t) \, \mathrm{d}x.
		\end{align*}
		The assumption stated in Hypothesis \ref{hypo1} ensures that $\dfrac{\beta_0\beta_1}{\beta_1^2} \geq 0$, which completes the proof.
	\end{proof}
Let us first observe that from inequality \eqref{eqq5}, we have
	\begin{equation}\label{eqq6}
		\parallel u(\cdot , t_2) \parallel^{1+M} \leq \exp \dfrac{1}{2} \left( M \max_{x\in\left[0,1\right]} \phi(x,t_1) - (1+M) \min_{x\in\left[0,1\right]} \phi(x,t_2) \right) \parallel u(\cdot,t_1) \parallel^M \parallel u(\cdot,t_3) e^{\frac{\phi}{2}(\cdot, t_3) } \parallel.
	\end{equation}
	Second, we express $\omega$ from \(\parallel u(\cdot,t_3) e^{\frac{\phi}{2}(\cdot, t_3) } \parallel\) as follows
	\begin{align*}
		\parallel u(\cdot,t_3) e^{\frac{\phi}{2}(\cdot, t_3) }  \parallel^2
		&=\int_{\omega}  |u(x,t_3)|^2 \,e^{\phi(x,t_3)} \,dx + \int_{\left(0,1\right)\backslash \omega} |u(x,t_3)|^2 \,e^{\phi(x,t_3)} \,dx \\
		&\leq \exp \left(\max_{x\in\overline{\omega}} \phi (x,t_3) \right) \int_{\omega}  |u(x,t_3)|^2 \,dx + \exp \left(\max_{x\in\overline{(0,1)\backslash \omega}} \phi (x,t_3) \right) \int_0^1  |u(x,t_3)|^2 \,dx.
	\end{align*}
	Combining the above two estimates, \eqref{eqq6} becomes
	\begin{align*}
		\parallel u(\cdot,t_2) \parallel^{1+M}
		&\leq \exp \dfrac{1}{2} \left( M \max_{x\in\left[0,1\right]} \phi(x,t_1) - (1+M) \min_{x\in\left[0,1\right]} \phi(x,t_2) + \max_{x\in\overline{\omega}} \phi(x,t_3) \right) \\
		& \times \parallel u(\cdot,t_1) \parallel^M \parallel u(\cdot,t_3) \parallel_{L^2(\omega)} \\
		&+ \exp \dfrac{1}{2} \left( M \max_{x\in\left[0,1\right]} \phi(x,t_1) - (1+M) \min_{x\in\left[0,1\right]} \phi(x,t_2) + \max_{x\in\overline{\left(0,1\right) \backslash \omega}} \phi(x,t_3) \right) \\
		& \times \parallel u(\cdot,t_1) \parallel^M \parallel u(\cdot,t_3) \parallel.
	\end{align*}
	Now, by applying Lemma \ref{lem3}, we have
    \[\| u(\cdot,t_2)\| \geq \| u(\cdot,T)\| \quad \text{and} \quad \| u(\cdot,t_1)\| \leq \| u(\cdot,0)\|.\]
    This allows us to write
	\begin{align*}
		\parallel u(\cdot,T) \parallel^{1+M}
		&\leq  \exp \dfrac{1}{2} \left( M \max_{x\in\left[0,1\right]} \phi(x,t_1) - (1+M) \min_{x\in\left[0,1\right]} \phi(x,t_2) + \max_{x\in\overline{\omega}} \phi(x,t_3) \right) \\
		& \times \parallel u(\cdot,0) \parallel^M \parallel u(\cdot,t_3) \parallel_{L^2(\omega)} \\
		&+ \exp \dfrac{1}{2} \left( M \max_{x\in\left[0,1\right]} \phi(x,t_1) - (1+M) \min_{x\in\left[0,1\right]} \phi(x,t_2) + \max_{x\in\overline{\left(0,1\right) \backslash \omega}} \phi(x,t_3) \right) \\
		& \times \parallel u(\cdot,0) \parallel^{1+M}.
	\end{align*}
	Subsequently, based on the definition of the weight function $\phi$, it can be concluded that
	\begin{align}\label{eqq7}
		&\parallel u(\cdot,T) \parallel^{1+M} \nonumber\\
		&\leq  \exp \dfrac{s}{2} \left( \dfrac{M}{T-t_1+h} \max_{x\in\left[0,1\right]} \varphi(x) - \dfrac{1+M}{T-t_2+h} \min_{x\in\left[0,1\right]} \varphi(x) + \dfrac{1}{T-t_3+h} \max_{x\in\overline{\omega}} \varphi(x) \right) \nonumber\\
		&\times \parallel u(\cdot,0) \parallel^M \parallel u(\cdot,t_3) \parallel_{L^2(\omega)} \nonumber\\
		&+  \exp \dfrac{s}{2} \left( \dfrac{M}{T-t_1+h} \max_{x\in\left[0,1\right]} \varphi(x) - \dfrac{1+M}{T-t_2+h} \min_{x\in\left[0,1\right]} \varphi(x) + \dfrac{1}{T-t_3+h} \max_{x\in\overline{\left(0,1\right) \backslash \omega}} \varphi(x) \right) \nonumber\\
		& \times  \parallel u(\cdot,0) \parallel^{1+M}.
	\end{align}	

    \noindent\textbf{Step 7.} Choose $t_3=T$, $t_2=T-lh$ and $t_1=T-2lh$ in such a way that $0< 2lh < T$ with $l>1$. With this choice, the expression in \eqref{eqq7} becomes
	\begin{align}\label{ing7}
		\parallel u(\cdot,T) \parallel^{1+M_l} 
		&\leq \exp \dfrac{s}{2 h} \left( \dfrac{M_l}{1+2 l} \max_{x\in\left[0,1\right]} \varphi(x) - \dfrac{1+M_l}{1+l} \min_{x\in\left[0,1\right]} \varphi(x) + \max_{x\in\overline{\omega}} \varphi(x) \right) \nonumber\\
		&\times \parallel u(\cdot,0) \parallel^{M_l} \parallel u(\cdot,T) \parallel_{L^2(\omega)} \\
		&+ \exp \dfrac{s}{2 h} \left( \dfrac{M_l}{1+2l} \max_{x\in\left[0,1\right]} \varphi(x) - \dfrac{1+M_l}{1+l} \min_{x\in\left[0,1\right]} \varphi(x) +  \max_{x\in\overline{\left(0,1\right) \backslash \omega}} \varphi(x) \right) \nonumber\\
		&\times \parallel u(\cdot,0) \parallel^{1+M_l}, \nonumber
	\end{align}
	In this context, $M_l= \dfrac{(l+1)^{C_0}-1}{1-\left(\frac{l+1}{2l+1}\right)^{C_0}}$. Therefore, for $l>1$ sufficiently large one has 
    \[M_l < \frac{(1+l)^{C_0}-1}{1-\left(\frac{2}{3}\right)^{C_0}}.\] Now, observe that $\varphi \leq 0$ on $(0,1)$ which implies that $\displaystyle{\max_{x\in\left[0, 1 \right]}} \varphi(x)=0$. Consequently, one obtains
	\begin{equation*}
		\dfrac{M_l}{1+2l} \max_{x\in\left[0,1\right]} \varphi(x) - \dfrac{1+M_l}{1+l} \min_{x\in\left[0,1\right]} \varphi(x) +  \max_{x\in\overline{\left(0,1\right) \backslash \omega}} \varphi(x)=  - \dfrac{1+M_l}{1+l} \min_{x\in\left[0,1\right]} \varphi(x) +  \max_{x\in\overline{\left(0,1\right) \backslash \omega}} \varphi(x).
	\end{equation*}
	Using the condition \eqref{cond} satisfied by the observation region $\omega$ and knowing that $C_0 \in (0,1)$, we can choose $l>1$ large enough to deduce that
	\begin{equation*}
		\dfrac{M_l}{1+2l} \max_{x\in\left[0,1\right]} \varphi(x) - \dfrac{1+M_l}{1+l} \min_{x\in\left[0,1\right]} \varphi(x) +  \max_{x\in\overline{\left(0,1\right) \backslash \omega}} \varphi(x) < 0,
	\end{equation*}
	and
	\begin{align*}
		\dfrac{M_l}{1+2l} \max_{x\in\left[0,1\right]} \varphi(x) - \dfrac{1+M_l}{1+l} \min_{x\in\left[0,1\right]} \varphi(x) +  \max_{x\in\overline{\omega}} \varphi(x) > 0.
	\end{align*}
	Hence, there exist two positive constants $C_1$ and $C_2$ such that for every $h > 0$ satisfying $0 < 2lh < T$, the following inequality is established
	\begin{align*}
		\parallel u(\cdot,T) \parallel^{1+M_l}
		&\leq  e^{\frac{C_1}{h}} \parallel u(\cdot, 0)\parallel^{M_l} \parallel u(\cdot,T) \parallel_{L^2(\omega)} +  e^{\frac{-C_2}{h}} \parallel u(\cdot,0) \parallel^{1+M_l}.
	\end{align*}
	On the other hand, leveraging Lemma \ref{lem3}, one can notice that $\parallel u(\cdot,T) \parallel \leq  \parallel u(\cdot,0) \parallel $. This combined with the fact that for any $2 lh \geq T$, 
	$1 \leq e^{\frac{2l}{T} C_2} e^{-\frac{C_2}{h}}$ yields that for any $h> 0$,
	\begin{equation*}
		\parallel u(\cdot,T) \parallel^{1+M_l}
		\leq e^{\frac{C_1}{h}} \parallel u(\cdot, 0)\parallel^{M_l} \parallel u(\cdot,T) \parallel_{L^2(\omega)} + e^{\frac{2l}{T} C_2} e^{\frac{-C_2}{h}} \parallel u(\cdot,0) \parallel^{1+M_l}.
	\end{equation*}
	To finalize, we select $h>0$ such that
	\begin{equation*}
		e^{\frac{2l}{T} C_2} e^{\frac{-C_2}{h}} \parallel u(\cdot,0) \parallel^{1+M_l} = \dfrac{1}{2} \parallel u(\cdot,T) \parallel^{1+M_l},
	\end{equation*}
	which leads to
	\begin{equation*}
		e^{\frac{C_1}{h}} = \left(2 e^{ \frac{2l}{T} C_2} \left( \dfrac{\parallel u(\cdot,0) \parallel}{\parallel u(\cdot,T) \parallel} \right)^{1+M_l} \right)^{\frac{C_1}{C_2}}.
	\end{equation*}
	As a consequence, we obtain
	\begin{align*}
		\parallel u(\cdot,T) \parallel^{1+M_l}
		&\leq 2\left[ 2 e^{ \frac{2l}{T} C_2} \left( \dfrac{\parallel u(\cdot,0) \parallel}{\parallel u(\cdot,T) \parallel} \right)^{1+M_l} \right]^{\frac{C_1}{C_2}} \parallel u(\cdot,0) \parallel^{M_l} \parallel u(\cdot,T) \parallel_{L^2(\omega)}.
	\end{align*}
	Then, for $l>1$ one has
	\begin{align*}
		\parallel u(\cdot,T) \parallel^{1+M_l+\frac{C_1}{C_2}(1+M_l)}
		\leq &e^{\left(1+\frac{C_1}{C_2}\right)\ln(2)+ \frac{2l}{T} C_1} \parallel u(\cdot,0) \parallel^{M_l + (1+M_l)\frac{C_1}{C_2}} \parallel u(\cdot,T) \parallel_{L^2(\omega)}\\
        \leq &e^{2l\left(1+\frac{C_1}{C_2}\right) \left(1+\frac{C_1}{T}\right)}  \parallel u(\cdot,0) \parallel^{M_l + (1+M_l)\frac{C_1}{C_2}} \parallel u(\cdot,T) \parallel_{L^2(\omega)}.
	\end{align*}
	Setting $\rho=\dfrac{1}{1+M_l + (1+M_l)\frac{C_1}{C_2}}$, $\mathcal{C}=2l \left(1+\dfrac{C_1}{C_2} \right)  \max\left(C_1, 1 \right)$ allows us to complete the proof.
\end{proof}

\section{Null approximate impulse controllability}\label{section4}
	We are now prepared to prove our principal result concerning the null approximate impulse controllability of the impulsive system \eqref{1.1}. To begin the proof, we present the following lemma, which is a direct consequence of the observation estimate derived in Lemma \ref{lem1}.
	
	\begin{lemma}\label{lem6}
    Assume that Hypothesis \ref{hypo1} holds true and the observation region $\omega$ meets the geometric condition \ref{cond}. Then, one can find positive constants $\mathcal{C}$ and $\beta$ such that the estimate remains valid for all $\epsilon>0$,
		\begin{equation}\label{eqq13}
			\parallel u(\cdot,T) \parallel^2 \leq \left( \dfrac{e^{\mathcal{C} \left(1+\frac{1}{T}\right)} }{\epsilon^{\beta}} \right)^2  \parallel u(\cdot,T) \parallel^2_{L^2(\omega)} + \epsilon^2 \parallel u(\cdot,0) \parallel^2.
		\end{equation}
	\end{lemma}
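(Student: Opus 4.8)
The plan is to deduce \eqref{eqq13} directly from the single-time observation estimate \eqref{ob} of Lemma \ref{lem1} by turning that multiplicative interpolation inequality into an additive one through Young's inequality. Writing $a=\|u(\cdot,T)\|$, $b=\|u(\cdot,T)\|_{L^2(\omega)}$ and $c=\|u(\cdot,0)\|$, estimate \eqref{ob} reads $a\le\big(e^{\mathcal{C}(1+1/T)}\,b\big)^{\rho}\,c^{1-\rho}$ with $\rho\in(0,1)$, the exponential weight being the one produced at the end of the proof of Lemma \ref{lem1}. Since the target \eqref{eqq13} is stated in terms of $b^2$ and $c^2$, the first move is to square \eqref{ob}, giving
\begin{equation*}
a^{2}\le\big(e^{2\mathcal{C}(1+1/T)}\,b^{2}\big)^{\rho}\,(c^{2})^{1-\rho},
\end{equation*}
which is now of the form $P^{\rho}Q^{1-\rho}$ with $P=e^{2\mathcal{C}(1+1/T)}b^{2}$ and $Q=c^{2}$.

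Next I would apply the weighted Young inequality $P^{\rho}Q^{1-\rho}\le\rho\,\lambda P+(1-\rho)\,\lambda^{-\rho/(1-\rho)}Q$, valid for every $\lambda>0$ with conjugate exponents $1/\rho$ and $1/(1-\rho)$. This produces, for each $\lambda>0$,
\begin{equation*}
a^{2}\le\rho\,\lambda\,e^{2\mathcal{C}(1+1/T)}\,b^{2}+(1-\rho)\,\lambda^{-\rho/(1-\rho)}\,c^{2}.
\end{equation*}
I would then calibrate the free parameter $\lambda$ so that the coefficient of $c^{2}$ equals $\epsilon^{2}$, i.e. impose $(1-\rho)\,\lambda^{-\rho/(1-\rho)}=\epsilon^{2}$, which gives $\lambda=\big((1-\rho)/\epsilon^{2}\big)^{(1-\rho)/\rho}$. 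This choice links the Young parameter to the prescribed tolerance $\epsilon$ and isolates the $\epsilon^{2}\|u(\cdot,0)\|^{2}$ term exactly as in \eqref{eqq13}.

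With this $\lambda$, the coefficient of $b^{2}$ equals $\rho\,(1-\rho)^{(1-\rho)/\rho}\,\epsilon^{-2(1-\rho)/\rho}\,e^{2\mathcal{C}(1+1/T)}$. Setting $\beta=(1-\rho)/\rho>0$ and invoking the elementary bound $\rho\,(1-\rho)^{\beta}\le 1$ (immediate since $\rho\in(0,1)$ and $\beta>0$), this coefficient is at most $\epsilon^{-2\beta}\,e^{2\mathcal{C}(1+1/T)}=\big(e^{\mathcal{C}(1+1/T)}/\epsilon^{\beta}\big)^{2}$, which is precisely the first factor in \eqref{eqq13}; the estimate then holds for every $\epsilon>0$. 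The whole argument is elementary, so there is no real analytic obstacle; the only point requiring a little care is conceptual rather than technical — one must square \eqref{ob} \emph{before} applying Young's inequality so that $b^{2}$ and $c^{2}$ (not $b$ and $c$) appear, and must keep the exponential weight $e^{\mathcal{C}(1+1/T)}$ untouched while tying $\lambda$ to $\epsilon$, so that the resulting constant $\mathcal{C}$ and exponent $\beta$ remain independent of $\epsilon$.
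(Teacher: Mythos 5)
Your proposal is correct and follows essentially the same route as the paper: square the interpolation estimate \eqref{ob}, apply Young's inequality with a free weight calibrated so that the coefficient of $\|u(\cdot,0)\|^{2}$ equals $\epsilon^{2}$, set $\beta=(1-\rho)/\rho$, and absorb the constant $\rho\,(1-\rho)^{\beta}\leq 1$. The paper's device of splitting the product into two pre-weighted factors before applying the unweighted Young inequality is just another way of writing your parameter $\lambda=\bigl((1-\rho)/\epsilon^{2}\bigr)^{(1-\rho)/\rho}$, so the two arguments coincide step for step.
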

	
	\begin{proof}
		According to Lemma \ref{lem1}, there exist a positive constant $\mathcal{C}$ and a parameter $\rho \in (0,1)$ such that
		\begin{equation*}
			\parallel u(\cdot,T) \parallel^2 \leq \left(  e^{ \mathcal{C} \left(1+\frac{1}{T}\right)} \parallel u(\cdot,T) \parallel_{L^2(\omega)}\right)^{2 \rho} \parallel u(\cdot,0) \parallel^{2(1-\rho)}.
		\end{equation*}
		Let $\epsilon>0$. We write
		\begin{align*}
			&\left( e^{ \mathcal{C} \left(1+\frac{1}{T}\right)}  \parallel u(\cdot,T) \parallel_{L^2(\omega)}\right)^{2 \rho} \parallel u(\cdot,0) \parallel^{2(1-\rho)}\\
			&=\left( \dfrac{\left(1-\rho \right)^{\frac{1-\rho}{2\rho}}}{\epsilon^{\frac{1-\rho}{\rho}}} e^{ \mathcal{C} \left(1+\frac{1}{T}\right)} \parallel u(\cdot,T) \parallel_{L^2(\omega)}   \right)^{2 \rho} \times \left( \epsilon\left(\dfrac{1}{1-\rho} \right)^{\frac{1}{2}} \parallel u(\cdot,0) \parallel  \right)^{2(1-\rho)}.
		\end{align*}
		An application of Young’s inequality yields the following estimate, valid for all $\epsilon > 0$.
		\begin{align*}
			\parallel u(\cdot,T) \parallel^2 
			\leq &\rho \left[ e^{ \mathcal{C} \left(1+\frac{1}{T}\right)} \left(\dfrac{1-\rho}{\epsilon^2} \right)^{\frac{1-\rho}{2 \rho}} \right]^2 \parallel u(\cdot,T) \parallel_{L^2(\omega)}^2 + \epsilon^2 \parallel u(\cdot,0) \parallel^2\\
			\leq & \dfrac{1}{\epsilon^{\frac{2(1-\rho)}{\rho}}}e^{ 2\mathcal{C} \left(1+\frac{1}{T}\right)} \parallel u(\cdot,T) \parallel_{L^2(\omega)}^2 + \epsilon^2 \parallel u(\cdot,0) \parallel^2.
		\end{align*}
		This computation confirms the inequality \eqref{eqq13}, where the parameter $\beta$  is given explicitly by
		\[\beta=\dfrac{1-\rho}{\rho}.\]
	\end{proof}
    \begin{remark}
    The analyticity of the semigroup \((e^{tA})_{t \geq 0}\) implies that the range \(\mathcal{R}(e^{TA})\) is dense in \(L^2(0,1)\). Consequently, null approximate impulse controllability is equivalent to approximate impulse controllability, meaning that  
    \[\forall y_0, y_T \in L^2(0,1), \; \forall \epsilon > 0, \; \exists h(\cdot, \tau) \in L^2(\omega) \;\; \text{such that} \;\; \|y(\cdot, T) - y_T\| \leq \epsilon \|y_0\|.\]  
    In light of this equivalence, we shall henceforth refer to this property simply as *approximate controllability*.  
    \end{remark}
	The main impulse controllability result of this work reads as follows.
	\begin{theorem}\label{thm1} Let us assume that Hypothesis \ref{hypo1} holds and that the control region $\omega$ satisfies the geometric condition \eqref{cond}. Under these conditions, the system \eqref{1.1} is approximately impulse controllable for all $T > 0$. In addition, for any $\epsilon > 0$, the cost of the null approximate impulse control function at time $T$ satisfies the following inequality:
    \[\mathcal{P}(T, \epsilon) \leq \frac{1}{\epsilon^{\beta}} e^{ \mathcal{C} \left(1 + \frac{1}{T - \tau}\right)},\]
    where the positive constants $\mathcal{C}$ and $\beta$ are obtained from the estimate \eqref{eqq13}.
	\end{theorem}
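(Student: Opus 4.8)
The plan is to deduce Theorem~\ref{thm1} from the interpolation estimate of Lemma~\ref{lem6} by a penalized duality (HUM) argument. By the equivalence recorded in the Remark above it suffices to exhibit, for each $\epsilon>0$ and $y_0\in L^2(0,1)$, a control $h$ with $\|y(\cdot,T)\|\le\epsilon\|y_0\|$ together with the announced bound on $\|h\|_{L^2(\omega)}$. Writing the mild solution of \eqref{ACP} at the final time as $y(\cdot,T)=e^{T\mathcal A}y_0+e^{(T-\tau)\mathcal A}\mathbf 1_\omega h$, the control map $h\mapsto e^{(T-\tau)\mathcal A}\mathbf 1_\omega h$ has adjoint $g\mapsto \mathbf 1_\omega e^{(T-\tau)\mathcal A}g=\mathbf 1_\omega u_g(\cdot,T-\tau)$, where $u_g$ denotes the solution of the adjoint system \eqref{1.2} with $u_g(\cdot,0)=g$. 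I therefore introduce on $L^2(0,1)$ the convex, continuous functional
\[
J_\epsilon(g)=\frac12\int_\omega |u_g(x,T-\tau)|^2\,\mathrm dx+\epsilon\|y_0\|\,\|g\|+\langle e^{T\mathcal A}y_0,g\rangle,
\]
whose minimizer will supply the control through $h:=\mathbf 1_\omega u_{g}(\cdot,T-\tau)$.

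The decisive point is the coercivity of $J_\epsilon$, and this is where the observation estimate enters. Since the equation is autonomous, Lemma~\ref{lem6} applies on the subinterval $(\tau,T)$, i.e. with final time $T-\tau$, producing the factor $e^{\mathcal C(1+1/(T-\tau))}$; invoking it with penalization parameter $\epsilon/2$ gives
\[
\|u_g(\cdot,T-\tau)\|\le \Lambda\,\|u_g(\cdot,T-\tau)\|_{L^2(\omega)}+\tfrac{\epsilon}{2}\|g\|,\qquad \Lambda:=\frac{e^{\mathcal C(1+1/(T-\tau))}}{(\epsilon/2)^{\beta}} .
\]
Because $e^{\tau\mathcal A}$ is a contraction, $|\langle e^{T\mathcal A}y_0,g\rangle|=|\langle e^{\tau\mathcal A}y_0,u_g(\cdot,T-\tau)\rangle|\le\|y_0\|\,\|u_g(\cdot,T-\tau)\|$, and substituting the previous line converts the linear term into an $\omega$-localized part plus $\tfrac{\epsilon}{2}\|y_0\|\|g\|$, so that half of the penalization survives the cancellation:
\[
J_\epsilon(g)\ge \tfrac12\|u_g(\cdot,T-\tau)\|_{L^2(\omega)}^2-\|y_0\|\Lambda\,\|u_g(\cdot,T-\tau)\|_{L^2(\omega)}+\tfrac{\epsilon}{2}\|y_0\|\,\|g\|\ge \tfrac{\epsilon}{2}\|y_0\|\,\|g\|-\tfrac12\|y_0\|^2\Lambda^2 .
\]
Hence $J_\epsilon(g)\to+\infty$ as $\|g\|\to\infty$; being convex and continuous it is weakly lower semicontinuous, and coercivity on the reflexive space $L^2(0,1)$ guarantees a minimizer $g^\ast$.

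From the minimizer I read off both the control and the two conclusions. If $g^\ast=0$, then $0\in\partial J_\epsilon(0)$ forces $\|e^{T\mathcal A}y_0\|\le\epsilon\|y_0\|$, so $h=0$ already suffices; otherwise $J_\epsilon$ is differentiable at $g^\ast$, and with $h:=\mathbf 1_\omega u_{g^\ast}(\cdot,T-\tau)$ the optimality condition $\langle e^{(T-\tau)\mathcal A}\mathbf 1_\omega h+e^{T\mathcal A}y_0,\tilde g\rangle+\epsilon\|y_0\|\,\langle g^\ast,\tilde g\rangle/\|g^\ast\|=0$ for all $\tilde g$ yields $y(\cdot,T)=-\epsilon\|y_0\|\,g^\ast/\|g^\ast\|$, whence $\|y(\cdot,T)\|=\epsilon\|y_0\|$ and approximate impulse controllability follows. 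For the cost, comparing with $J_\epsilon(0)=0$ gives $J_\epsilon(g^\ast)\le0$; discarding the nonnegative term $\epsilon\|y_0\|\|g^\ast\|$ and estimating $\langle e^{T\mathcal A}y_0,g^\ast\rangle$ exactly as above leads to $\tfrac12\|h\|_{L^2(\omega)}^2\le\|y_0\|\Lambda\,\|h\|_{L^2(\omega)}$, i.e. $\|h\|_{L^2(\omega)}\le 2\Lambda\|y_0\|=2^{1+\beta}\|y_0\|\,e^{\mathcal C(1+1/(T-\tau))}/\epsilon^{\beta}$. Taking the supremum over $\|y_0\|=1$ and absorbing the harmless factor $2^{1+\beta}$ into $\mathcal C$ gives the stated bound on $\mathcal P(T,\epsilon)$.

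I expect the coercivity estimate to be the only genuine obstacle. The linear term $\langle e^{T\mathcal A}y_0,g\rangle$ is a priori of size $\|y_0\|\|g\|$, which matches and could overwhelm the penalization $\epsilon\|y_0\|\|g\|$; the resolution is precisely to invoke Lemma~\ref{lem6} with the halved parameter $\epsilon/2$, so that the linear term is traded for an $\omega$-localized contribution plus only $\tfrac{\epsilon}{2}\|y_0\|\|g\|$, leaving a definite fraction of the penalization to drive coercivity. Everything else is a routine convex-duality computation, once the observation is carried out on the correct interval $(\tau,T)$.
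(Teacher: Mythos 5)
Your proposal is correct, and although it is, like the paper's proof, a penalized duality argument resting on Lemma \ref{lem6}, the execution differs in three substantive ways. First, you use Lions-type nonsmooth penalization $\epsilon\|y_0\|\,\|g\|$, so the optimality condition gives the terminal state exactly, $y(\cdot,T)=-\epsilon\|y_0\|\,g^{\ast}/\|g^{\ast}\|$ (with the subdifferential at $0$ handling $g^{\ast}=0$), and the cost bound drops out of $J_\epsilon(g^{\ast})\le J_\epsilon(0)=0$; the paper instead minimizes the smooth functional $\mathcal{J}_{\epsilon,K}(v_0)=\frac{K^2}{2}\|v(\cdot,T-\tau)\|^2_{L^2(\omega)}+\frac{\epsilon^2}{2}\|v_0\|^2+\langle y_0,v(\cdot,T)\rangle$ with the weight $K$ built in, obtains $y(\cdot,T)=-\epsilon^2\tilde{v}_0$ and $f(\cdot,\tau)=K^2\tilde{v}(\cdot,T-\tau)$, and extracts both conclusions from the combined inequality $K^{-2}\|f\|^2_{L^2(\omega)}+\epsilon^{-2}\|y(\cdot,T)\|^2\le\|y_0\|^2$, which yields the constant $K$ exactly, whereas your route produces the extra factor $2^{1+\beta}$ that must be absorbed into $\mathcal{C}$ — harmless, but a small deviation from the theorem's literal claim that $\mathcal{C},\beta$ are those of \eqref{eqq13}. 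Second, and this is where your version is actually more robust: you apply the observability estimate at time $T-\tau$ over the horizon of length $T-\tau$, exactly where the control acts, so only $\|u_g(\cdot,T-\tau)\|_{L^2(\omega)}$ ever appears; the paper applies \eqref{eqq13} at time $T$ and then invokes $\|v(\cdot,T)\|_{L^2(\omega)}\le\|v(\cdot,T-\tau)\|_{L^2(\omega)}$, but Lemma \ref{lem3} only gives monotonicity of the full $L^2(0,1)$ norm, and monotonicity of the localized norm is false in general (heat can flow into $\omega$), so your placement of the estimate quietly repairs a gap in the paper's own write-up. Third, you encode duality through the semigroup identity $\langle e^{T\mathcal{A}}y_0,g\rangle=\langle e^{\tau\mathcal{A}}y_0,u_g(\cdot,T-\tau)\rangle$ and the contraction property, where the paper multiplies by $z(\cdot,T-t)$ and integrates across the impulse time; the two are equivalent since $\mathcal{A}$ is self-adjoint. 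The only omission is the trivial case $y_0=0$, where the coercivity estimate for $J_\epsilon$ degenerates but $h=0$ plainly suffices; a one-line remark would close it.
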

	
	\begin{proof}
		Let $\epsilon > 0$, $y_0 \in L^2(0,1)$, and define $K = \frac{1}{\epsilon^{\beta}} e^{ \mathcal{C} \left(1 + \frac{1}{T - \tau}\right)}$. We introduce the functional $\mathcal{J}_{\epsilon, K}: L^2(0,1) \to \mathbb{R}$ defined by
        \[\mathcal{J}_{\epsilon, K}(v_0) = \frac{K^2}{2} \| v(\cdot, T - \tau) \|^2_{L^2(\omega)} + \frac{\epsilon^2}{2} \| v_0 \|^2 + \langle y_0, v(\cdot, T) \rangle,\]
        where $v$ is the solution of the non-impulsive system \eqref{1.2} corresponding to the initial condition $v_0$. By applying the estimate \eqref{eqq13} and utilizing the fact that
        \[\|v(\cdot, T)\|_{L^2(\omega)} \leq \|v(\cdot, T - \tau)\|_{L^2(\omega)},\]
        we derive the following inequality:
        \[\frac{K^2}{2} \|v(\cdot, T - \tau)\|^2_{L^2(\omega)} \geq \frac{1}{4} \left( \frac{e^{\mathcal{C} \left(1 + \frac{1}{T}\right)}}{\epsilon^{\beta}} \right)^2 \|v(\cdot, T)\|^2_{L^2(\omega)} \geq \frac{1}{4} \|v(\cdot, T)\|^2 - \frac{\epsilon^2}{4} \|v_0\|^2.\]
        Thus, by applying Young's inequality, we derive the following result
        \begin{align*}
            \mathcal{J}_{\epsilon,K}(v_0) 
            \geq &\dfrac{1}{4} \|v(\cdot,T)\|^2 + \dfrac{\epsilon^2}{4} \|v_0\|^2 + \left\langle y_0 , v(\cdot,T) \right\rangle\\
            \geq &\dfrac{\epsilon^2}{4} \|v_0\|^2 - \|y_0\|^2.
        \end{align*}
        This leads to the coercivity of the functional $\mathcal{J}_{\epsilon, K}$. Moreover, owing to its strict convexity and $\mathcal{C}^1$ differentiability, we can deduce that $\mathcal{J}_{\epsilon, K}$ attains a unique minimizer $\tilde{v}_0 \in L^2(0,1)$, which is characterized by
		\begin{equation}\label{eqq19}
			\mathcal{J}_{\epsilon,K}(\tilde{v}_0)= \min_{v_0 \in L^2(0,1)} \mathcal{J}_{\epsilon,K}(v_0).
		\end{equation}
		Additionally, $\mathcal{J}_{\epsilon, K}'(\tilde{v}_0) z_0 = 0$ for every $z_0 \in L^2(0,1)$, which results in the following bound for any $z_0$
		\begin{equation}\label{eqq1}
			K^2 \int_{\omega}  \tilde{v}(x,T-\tau) z(x,T-\tau) \,\mathrm{d}x + \epsilon^2 \int_{0}^{1}  \tilde{v}_0(x) z_0(x) \,\mathrm{d}x + \int_{0}^{1}  y_0(x) z(x,T) \,\mathrm{d}x =0.
		\end{equation}
		Here, $\tilde{v}$ and $z$ represent the solutions of the non-impulsive system \eqref{1.2}, associated with the initial conditions $\tilde{v}_0$ and $z_0$, respectively. To proceed, we multiply the first equation in \eqref{1.1} by $z(\cdot, T - t)$ for all $t \in (0, T) \setminus \{\tau\}$, and integrate over the spatial domain $(0, 1)$, leading to the following result
		\begin{equation*}
			\int_{0}^{1} \Big(\partial_t y - (x^{\alpha} y_x)_x\Big)(x,t) z(x,T-t) \,\mathrm{d}x=0.
		\end{equation*}
		By applying integration by parts, one gets
		\begin{equation}\label{star}
			\partial_t	\left(\int_{0}^{1}  y(x,t) z(x,T-t)\,\mathrm{d}x\right)=0.
		\end{equation}
		Next, we integrate \eqref{star} over $(0,\tau)$, obtaining
		\begin{equation}\label{starr}
			\int_{0}^{1}  y(x,\tau^{-}) z(x,T-\tau)\,\mathrm{d}x=\int_{0}^{1}  y_0(x) z(x,T) \,\mathrm{d}x.
		\end{equation}
		Integrating again \eqref{star} over $(\tau,T)$, gives
		\begin{equation}\label{starrr}
			\int_{0}^{1}  y(x,\tau) z(x,T-\tau)\,\mathrm{d}x= \int_{0}^{1}  y(x,T) z_0(x) \,\mathrm{d}x.
		\end{equation}
		By associating \eqref{starr} and \eqref{starrr}, and taking into account that $y(\cdot,\tau)=y(\cdot,\tau^{-})+ 1_{\omega}h(\cdot,\tau)$, we infer that
		\begin{equation}\label{eqq2}
			\int_{\omega}  f(x,\tau) z(x,T-\tau)\,\mathrm{d}x - \int_{0}^{1}  y(x,T)z_0(x)\,\mathrm{d}x + \int_{0}^{1}  y_0(x) z(x,T)\,\mathrm{d}x=0.
		\end{equation}
		Based on \eqref{eqq1} and \eqref{eqq2}, selecting $f(\cdot,\tau)=K^2 \tilde{v}(\cdot,T-\tau)$ yields
		\begin{equation}\label{eqq3}
			\int_{0}^{1}  \Big(y(x,T)+ \epsilon^2 \tilde{v}_0(x)\Big)z_0(x)\,\mathrm{d}x=0,\quad \forall z_0\in L^2(0,1).
		\end{equation}
		This leads to the conclusion that $y(\cdot,T)=-\epsilon^2 \tilde{v}_0$. Additionally, by setting $z_0=\tilde{v}_0$, and applying both \eqref{eqq1} and Young's inequality, we can derive
		\begin{equation}\label{eqq4}
			K^2 \parallel \tilde{v}(\cdot,T-\tau) \parallel^2_{L^2(\omega)}+ \epsilon^2 \parallel \tilde{v}_0 \parallel^2 \leq \dfrac{1}{2}\parallel y_0 \parallel^2 + \dfrac{1}{2} \parallel \tilde{v}(\cdot,T) \parallel^2.
		\end{equation}
		Now, by using \eqref{eqq13}  and applying Lemma \ref{lem3}, we can now make the following estimation
		\begin{align}\label{eqq8}
			\parallel \tilde{v}(\cdot,T) \parallel^2 
			&\leq \left( \dfrac{1}{\epsilon^{\beta}} e^{\mathcal{C}\left(1+\frac{1}{T}\right)}  \right)^2 \parallel \tilde{v}(\cdot,T) \parallel^2_{L^2(\omega)}+\epsilon^2 \parallel \tilde{v}_0 \parallel^2 \nonumber\\
			&\leq K^2 \parallel \tilde{v}(\cdot,T-\tau) \parallel^2_{L^2(\omega)}+\epsilon^2 \parallel \tilde{v}_0 \parallel^2.
		\end{align}
		Finally, combining \eqref{eqq4} and \eqref{eqq8} , it follows that
		\begin{equation*}
			K^2 \parallel \tilde{v}(T-\tau) \parallel^2_{L^2(\omega)}+ \epsilon^2 \parallel \tilde{v}_0 \parallel^2 \leq \parallel y_0 \parallel^2.
		\end{equation*}
		Remember that $y(\cdot,T)=-\epsilon^2 \tilde{v}_0$ and $f(\cdot,\tau)=K^2 \tilde{v}(\cdot,T-\tau)$, it follows that
		\begin{equation}\label{eqq9}
			\dfrac{1}{K^2} \parallel f(\cdot,\tau) \parallel^2_{L^2(\omega)}+ \dfrac{1}{\epsilon^2} \parallel y(\cdot,T) \parallel^2 \leq \parallel y_0 \parallel^2.
		\end{equation}
		This concludes the proof of Theorem \ref{thm1}.
	\end{proof}

    \section{Conclusion}\label{section5}
    This paper investigates an impulse control problem for a one-dimensional degenerate heat equation in divergence form, subject to Robin boundary conditions. When the control is supported within a prescribed subinterval of the spatial domain (as specified in condition \eqref{cond}), we establish null approximate controllability and derive an explicit upper bound on the cost of the impulse control. The analysis relies on a crucial observability estimate at a single time instant for solutions to the associated non-impulsive system. However, if the control support is strictly confined to the interior of the physical domain $(0,1)$, the problem remains unresolved, even for the degenerate equation without classical Dirichlet or Neumann boundary conditions.\\

\textbf{Author Contributions } H.E.B., and I.O. participated equally in providing the mathematical arguments and contributed to the writing and review of the manuscript.\\

\textbf{Funding} The authors declare that this research did not receive any funding.\\

\textbf{Data Availability} No datasets were generated or analysed during the current study.

\section*{Declarations}

\textbf{Conflict of interest} The authors have no conflicts of interest to declare that are relevant to the content of this
article.\\

\textbf{Ethics approval} The authors confirm that this work did not involve any human and/or animal studies and, as a result, did not require ethics approval.\\

\textbf{Competing interests} The authors declare no competing interests.


\end{document}